\newcommand{\mmp}{\mathbb{P}}
\newcommand{\od}{\overset{d}{=}}
\newcommand{\dod}{\overset{d}{\to}}
\newcommand{\tp}{\overset{P}{\to}}
\newcommand{\me}{\mathbb{E}}
\newcommand{\mr}{\mathbb{R}}
\newcommand{\mn}{\mathbb{N}}
\newcommand{\Sym}{\frak S}
\newcommand{\lin}{\underset{n\to\infty}{\lim}}
\newcommand{\lix}{\underset{x\to\infty}{\lim}}
\newcommand{\lit}{\underset{t\to\infty}{\lim}}
\newenvironment{myproof}{\noindent {\it Proof} }{$\Box$ }
\newtheorem{thm}{Theorem}[section]
\newtheorem{lemma}[thm]{Lemma}
\theoremstyle{definition}
\theoremstyle{remark}
\newtheorem{rem}[thm]{Remark}
\begin{document}
\title{A generalization of the Erd\H{o}s-Tur\'an law for the order of random permutation}
\author{
Alexander Gnedin\footnote{Queen Mary University of London, e-mail: a.gnedin@qmul.ac.uk},
Alexander Iksanov\footnote{National Taras Shevchenko University of Kiev, e-mail: iksan@univ.kiev.ua} \;and
Alexander Marynych\footnote{National Taras Shevchenko University of Kiev, e-mail: marynych@unicyb.kiev.ua}
}

\date{\today}
\maketitle

\begin{abstract}
\noindent We consider random permutations derived by sampling from
stick-breaking partitions of the unit interval. The cycle
structure of such a permutation can be associated with the path of
a decreasing Markov chain on $n$ integers. Under certain
assumptions on the stick-breaking factor we prove a central limit
theorem for  the logarithm of the order of  the  permutation, thus
extending the classical Erd\H{o}s-Tur\'an law for the uniform
permutations and its generalization for Ewens' permutations
associated with sampling from the PD/GEM$(\theta)$ distribution
\cite{ABT}. Our approach is based on using perturbed random walks
to obtain the limit laws for the sum of logarithms of the cycle
lengths.
\end{abstract}
\noindent Keywords: random permutation,  Erd\H{o}s-Tur\'an law,  stick-breaking,
perturbed random walk

\section{Introduction}
Let $\Sym_n$ be the symmetric group  on $[n]:=\{1,\ldots, n\}$.
The order of permutation $\sigma\in \Sym_n$ is the smallest positive integer $k$ such that
the $k$-fold composition of $\sigma$ with itself
is the identity permutation. The order can be determined from the
cycle representation of $\sigma$ as the least common multiple
({\rm l.c.m.}) of the cycle lengths. For instance, permutation
$\sigma=(1\,9\,6\,2)(3\,7\,5)(4\, 8)$ has order 12.

A  random permutation $\Pi_n$ of $[n]$ is a random variable
with values in the set $\Sym_n$.
A widely known parametric family of random permutations has  probability mass function
\begin{equation}\label{ESF1}
{\mathbb P}\{\Pi_n=\sigma\}=c^{-1}\,{\theta^{|\sigma|}}, ~~~~\theta>0,
\end{equation}
where $|\sigma|$ denotes the number of cycles, and
the constant is $c=(\theta)_n:=\Gamma(\theta+n)/\Gamma(\theta)$.
This family is sometimes called Ewens' permutations since the collection of cycle lengths
is then a random partition distributed according to
 the Ewens sampling formula \cite{ABT, CSP}.
The instance $\theta=1$ corresponds to the  uniform distribution under which all permutations $\sigma\in \Sym_n$
are equally likely.

For random permutation $\Pi_n$ with some fixed distribution let $K_{n,r}$ be the number
of cycles of length $r$
and let $K_n:=|\Pi_n|=\sum_{r=1}^n K_{n,r}$
be the total number of cycles.
We call vector $(K_{n,1},\dots,K_{n,n})$   the {\it cycle partition} of $\Pi_n$.
In terms of the cycle partition the order of $\Pi_n$ is the random variable  defined as
\begin{equation}\label{o_def}
O_n:={\rm l.c.m.}\{r\in [n]: K_{n,r}>0\}.
\end{equation}
In a seminal 1967 paper \cite{ET67} Erd\H{o}s and Tur\'an showed
that for the uniform permutation the distribution of $\log O_n$ is
asymptotically normal. Arratia and Tavar\'e \cite{AT92b} extended
this result to Ewens' permutations, by showing that
\begin{equation}\label{et}
{\log O_n-(\theta/2)\log^2 n\over \sqrt{(\theta/3)\log^3 n}} \
\dod \ \mathcal{N}(0,1), \ \ ~~~~n\to\infty.
\end{equation}
The proof in \cite{AT92b} (see also \cite{ABT}, Theorem 5.15),
apparently the shortest one known, is based on the Feller coupling
and asymptotic independence of the $K_{n,r}$'s.

In this paper we generalize the  Erd\H{o}s-Tur\'an law to a much
richer family of random permutations derived from stick-breaking
partitions of the unit interval by means of a simple occupancy
scheme called Kingman's `paintbox process' \cite{CSP}. A toolbox
of methods suitable for the study of Ewens' permutations is no
longer applicable in the wider setting due to the lack of
asymptotic independence of the $K_{n,r}$'s. Instead, extending the
line initiated in \cite{Gne, GIM, GIM2, GINR, GIR, Iks11, Iks111},
we apply   the methods of renewal theory to obtain results on the
weak convergence of the decisive quantity $\log T_n:=\sum_r
K_{n,r}\log r$ which approximates the logarithm of the order of
permutation.  We show that the normal and other stable
distributions can appear as limit laws, as determined by
properties of the  stick-breaking factor.

There have been many studies of random permutations
that
are  conditionally uniform given the value of some permutation statistic
\cite{Diaconis, DiscrMath, GO, Ueltschi}.
Our motivation to consider the class of stick-breaking models has several sources, among which are the
theory of regenerative composition structures \cite{RCS}, more general
exchangeable partitions \cite{CSP} and  the logarithmic combinatorial structures \cite{ABT}.
The present paper  is the first study of a {\it separable
statistic}  $\sum_r K_{n,r} h(r)$ with unbounded function $h$ for
the partitions of integers derived from the general stick-breaking.
It would be interesting to further study separable statistics and approximations to $O_n$
for other permutation models associated with exchangeable partitions.

The organization of the rest of the paper is as follows.
In Section \ref{Perm} we
introduce the class of permutations derived from the stick-breaking.
The principal results are formulated in Section \ref{Main}.
In Section \ref{approx}
we prove that under various regularity conditions
$\log T_n$ yields a good approximation to $\log O_n$ with an error term of the order
$o(\log^{3/2} n)$.
In Section \ref{logpn_asympt} we investigate the weak convergence of $\log T_n$
and prove Theorem \ref{main2}; the method here exploits a link
between the $K_{n,r}$'s and certain perturbed random walks.
Theorem \ref{main} which is our generalization of the
Erd\H{o}s-Tur\'an law follows then as a corollary. The auxiliary
results used in the proofs are collected in the Appendix.

\section{Permutations derived from stick-breaking}\label{Perm}

\paragraph{The Basic Construction}
Let $W$ be a random variable, called {\it stick-breaking factor}, with
values in $(0,1)$.
Consider a multiplicative
renewal point process $\cal Q$ with atoms
$$
Q_0:=1, \ \ Q_j:=\prod_{i=1}^j W_i, \ \ j\in\mn,
$$
where $W_i$  are independent replicas of $W$. The gaps in $\cal
Q$ yield a partition of $[0,1]$ in infinitely many intervals
$(Q_{j+1},Q_{j}]$ accumulating near $0$. Let $U_1,\ldots, U_n$ be
a sample from the uniform $[0,1]$ distribution, independent of
$\cal Q$.
A random permutation $\Pi_n$ is defined by organizing integers $i_1,\dots,i_\ell$ in a cycle $(i_1~\dots~i_\ell)$ if
the following occur:
\begin{itemize}
\item[(i)]   $U_{i_1}<\dots<U_{i_\ell}$,
\item[(ii)] the sample points $U_{i_1},\dots,U_{i_\ell}$ fall in the same interval  $(Q_{j+1},Q_{j}]$ ,
\item[(iii)] only $U_{i_1},\dots,U_{i_\ell}$ out of $U_1,\dots,U_n$ fall in  this interval $(Q_{j+1},Q_{j}]$.
\end{itemize}
Listing the sample points in increasing order and inserting a $|$
between two neighbouring order statistics if they belong to
distinct component intervals of $(0,1]\setminus{\cal Q}$, the
cycle notation of $\Pi_n$ is read left-to-right. \vskip0.2cm For
instance, the list $U_7\,|\,U_3~U_4~ U_2~U_5\,|\,U_6~U_1$ yields
permutation $(7)(3~4~2~5)(6~1)$. To pass to the standard cycle
notation $(1~6)(2~5~3~4)(7)$ one needs to re-arrange the cycles in
the order of increase of their minimal elements, and to rotate
each cycle so that the least element of the cycle appears first.
We prefer, however, to write the cycles and the elements within
the cycles in accord with the natural order on reals, as dictated
by the Basic Construction. A reason for this ordering of cycles is
the following recurrence property:
\begin{itemize}
\item {\it Regeneration}: for  $m\in \{1,\dots,n-1\}$, conditionally given the last cycle of $\Pi_n$ has length $m$,
the cycle partition of $\Pi_n$ with the last cycle deleted has the same distribution as the cycle partition of $\Pi_{n-m}$.
\end{itemize}
It is straightforward from the construction that $\Pi_n$ also satisfies:
\begin{itemize}
\item {\it Coherence}: permutations $\Pi_n$ are defined consistently for all values of $n$.
Passing from $\Pi_{n+1}$ to $\Pi_n$ amounts to removing integer $n+1$ from a cycle.
\item {\it Exchangeability}: the distribution of $\Pi_n$ is invariant under conjugations in ${\frak S}_n$.
Equivalently, given the cycle partition $(K_{n,1},\dots,K_{n,n})$ the distribution of $\Pi_n$ is uniform.
\end{itemize}
In combination with exchangeability, the regeneration property can be re-stated as follows:
given the last cycle of $\Pi_n$ is of length $m$, a permutation resulting from deletion of the last cycle and re-labeling the remaining
elements by the increasing bijection with $[n-m]$ is a distributional copy of $\Pi_{n-m}$.

There are two further useful ways to generate the cycle partition of $\Pi_n$.

\paragraph{A Markov chain representation} Consider a decreasing Markov chain on nonnegative integers with absorption at $0$ and
the {\it decrement matrix}
\begin{equation}\label{dec-ma}
q(n,m)={n\choose m} \frac{{\mathbb E}[W^{n-m}(1-W)^m]}{1-{\mathbb E}W^n}, ~~~~1\leq m\leq n,
\end{equation}
specifying transition probabilities from $n$ to $n-m$.
 For the Markov chain $M_n$ starting at $n$, $K_{n,r}$ is the number of jumps of size $r$ on the path of
$M_n$ from $n$ to $0$. The arrangement of the cycle lengths in the Basic Construction corresponds to the decrements of $M_n$ written in the
time-reversed order.
\paragraph{The infinite occupancy scheme} This model is sometimes called the {\it Bernoulli sieve}
\cite{Gne, GIM, GIM2, GINR, GIR, Iks11, Iks111}. Think of the gaps
$(Q_{j},Q_{j-1}]$ as boxes $1,2,\dots$ with frequencies
\begin{equation}\label{17}
P_j:=W_1W_2\cdots W_{j-1}(1-W_j), \ \ j\in\mn.
\end{equation}
Given the frequencies, balls $1,2,\dots$ are thrown independently so that each ball hits box $j$ with probability $P_j$.
Then $K_{n,r}$ is the number of boxes occupied by exactly $r$ out of the first $n$ balls.
\paragraph{Additive renewal process representation} Mapping $(0,1]$ to ${\mathbb R}_+$ via $x\mapsto -\log x$
sends $\cal Q$ to the additive renewal process
with the generic increment $-\log W$, and sends the uniform sample to a sample from the standard exponential distribution.
The construction of permutation and the occupancy scheme are obviously re-stated in the new variables.

\vskip0.2cm
It has been observed (see \cite{Gne2}, Theorem 2.1) that the
instance of Ewens' permutation fits in the Basic Construction by choosing a
  factor $W\stackrel{d}{=}{\rm beta}(\theta,1)$, with the density
 $$\mmp\{W\in {\rm d}x\}=\theta x^{\theta-1}{\rm
d}x,~~~~x\in (0,1).$$
A better known connection of Ewens' $\Pi_n$ to the stick-breaking stems from the fact that
the scaled by $n$ lengths of the cycles in the normalized notation converge as $n\to\infty$ to $(P_1,P_2,\dots)$ as in (\ref{17})
with $W_j\stackrel{d}{=}{\rm beta}(\theta,1)$.
The distribution of the limit is known as the GEM$(\theta)$ law, which is related to the Poisson-Dirichlet PD$(\theta)$-distribution
through a size-biased permutation of the terms.
As a finite-$n$ counterpart of this dual role of the stick-breaking,
the sequence of lengths of cycles ordered by increase of the minimal elements
and the reversed sequence of the cycle lengths derived from the Basic Construction have the same distribution.
In particular, both sequences can be identified with the sequence of decrements of the
Markov chain $M_n$ with
decrement matrix
$$q(n,n-m)={n\choose m}\frac{(\theta)_{n-m}m!}{(\theta+1)_{n-1}n}.$$
It follows from a result of Kingman
that the coincidence of distributions of the two different arrangements of the unordered  set of the cycle-lengths
characterizes the Ewens permutation within the family of random permutations with
the regenerative property, see \cite{GHP} for this fact and variations.

We note in passing that by a version of the Basic Construction
each system of coherent random permutations $(\Pi_n)_{n\in{\mathbb
N}}$ with the properties of exchangeability and regeneration, with
respect to deletion of a cycle of $\Pi_n$ chosen by some random
rule, uniquely corresponds to a random regenerative subset of
${\mathbb R}_+$ which coincides with the closed range
of a subordinator $S$ \cite{RCS} . Distinguishing features of the subfamily in
focus in the present paper are: (1) $S$ is a compound Poisson
process with jumps distributed like $|\log W|$; (2) the last cycle
of $\Pi_n$ has the length of the order $O(n)$
as $n$ grows.

\section{Main results}\label{Main}

In the sequel we use the following notation for the moments
of the stick-breaking factor
$$\mu:=\me |\log W|, \ \ \sigma^2:={\rm Var}\,(\log W) \ \ \text{and} \ \ \nu:=\me |\log
(1-W)|,$$ which may be finite or infinite. We shall also use the
notion of slow variation. Function $\ell:(0,\infty)\to (0,\infty)$
is called {\it slowly varying at $\infty$} if for all $\lambda>0$,
$$
\lim_{x\to\infty}\frac{\ell(\lambda x)}{\ell(x)}=1.
$$
Our purpose is to extend \eqref{et} to a wider class of random
permutations $\Pi_n$  derived from stick-breaking,  along the following lines.
\begin{thm}\label{main}
Suppose the law of $W$ is absolutely continuous with a density
$f$.
\begin{enumerate}
\item[{\rm I.}] If there exist $\delta_1\geq 0$ and $\delta_2\geq 0$ such that $f$ is nonincreasing on $(0,\delta_1)$,
bounded on $[\delta_1,1-\delta_2]$ and nondecreasing on
$(1-\delta_2,1)$ then
\begin{itemize}
\item[\rm (a)] If $\sigma^2<\infty$ then, with
\begin{equation}\label{26}
b_n=\mu^{-1}\bigg(2^{-1}\log^2 n-\int_0^{\log n}\int_0^z
\mmp\{|\log (1-W)|>x\}{\rm d}x{\rm d}z\bigg)
\end{equation}
and $a_n=((3\mu^3)^{-1}\sigma^2\log^3 n)^{1/2}$, the limiting
distribution of $(\log O_n-b_n)/a_n$ is standard normal.

\item[\rm(b)] If $\sigma^2=\infty$, and
$$\int_0^x y^2\, \mmp\{|\log W|\in {\rm d}y\} \ \sim \ \ell(x), \ \ x\to \infty,$$ for some $\ell$ slowly varying at $\infty$,
then, with $b_n$ given by \eqref{26} and
$$a_n=(3\mu^3)^{-1/2}
c_{[\log n]}\log n,$$
where $(c_n)$ is any positive sequence
satisfying $\lin\,n\ell(c_n)/c_n^2=1$, the limiting distribution
of $(\log O_n-b_n)/a_n$ is standard normal.
\item[\rm(c)] If
\begin{equation}\label{domain1}
\mmp\{|\log W|>x \} \ \sim \ x^{-\alpha}\ell(x), \ \ x\to \infty,
\end{equation}
for some $\ell$ slowly varying at $\infty$ and $\alpha\in (1,2)$
then, with $b_n$ as in \eqref{26} and
$$a_n=((\alpha+1)\mu^{\alpha+1})^{-1/\alpha} c_{\lfloor\log n\rfloor}\log n,$$
where $(c_n)$ is any positive sequence satisfying $\lin
\,n\ell(c_n)/c_n^\alpha=1$, the limiting distribution of $(\log
O_n-b_n)/a_n$ is the $\alpha$-stable law with characteristic
function
\begin{equation}\label{st}
u\mapsto \exp\{-|u|^\alpha
\Gamma(1-\alpha)(\cos(\pi\alpha/2)+i\sin(\pi\alpha/2)\, {\rm
sgn}(u))\}, \ u\in\mr.
\end{equation}
\end{itemize}

\item[{\rm II.}] If for some $\alpha\in[0,1)$
\begin{equation}\label{density_cond}
\sup_{x\in[0,1]}x^\alpha(1-x)^{\alpha}f(x)<\infty;
\end{equation}
then $\sigma^2<\infty$ and
$$
{\log O_n - (2\mu)^{-1} \log^2 n\over
\sqrt{(3\mu^3)^{-1}\sigma^2\log^3 n}} \ \dod \ \mathcal{N}(0,1), \
\ n\to\infty.
$$
\end{enumerate}
\end{thm}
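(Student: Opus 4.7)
The strategy matches the two-stage plan announced in the introduction: first replace $\log O_n$ by the separable statistic
$$T_n \ := \ \sum_{r=1}^n K_{n,r}\log r$$
with error $o(\log^{3/2} n)$ in probability (which is $o(a_n)$ in each of the four cases, since $a_n$ grows at least like $\log^{3/2}n$ in cases I(a), I(b), II and strictly faster in I(c)), and then prove the asserted Gaussian or $\alpha$-stable limit directly for $(T_n-b_n)/a_n$ via the perturbed-random-walk representation of the Bernoulli sieve.

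For the replacement step I would begin with the prime-factorization identity
$$T_n-\log O_n \ = \ \sum_{p\text{ prime}}\log p\cdot\Bigl(\sum_{r=1}^n K_{n,r}v_p(r)\ -\ \max_{r\colon K_{n,r}>0}v_p(r)\Bigr),$$
where $v_p$ denotes the $p$-adic valuation, and split the sum at $p\le\sqrt{\log n}$ versus $p>\sqrt{\log n}$. The small-prime contribution is bounded via moment estimates on $\sum_r K_{n,r}\mathbf{1}\{p^k\mid r\}$ (the number of cycles whose length is divisible by $p^k$), which the regenerative structure of the Markov chain $M_n$ controls to $O(\log n)$ in expectation. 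The large-prime contribution uses that two distinct cycles sharing a large prime factor in their lengths occur rarely, exactly as in the classical Erd\H{o}s--Tur\'an argument. The density hypothesis on $W$ enters only through uniform moment control of the one-step distribution \eqref{dec-ma}.

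The heart of the proof is the limit theorem for $T_n$. In the Bernoulli-sieve representation write $T_n=\sum_j\log X_{n,j}\mathbf{1}\{X_{n,j}\ge 1\}$ with $X_{n,j}$ the occupancy of box $j$, and pass to the additive scale by setting $S_j:=\sum_{i=1}^j|\log W_i|$ and $\eta_j:=-\log(1-W_j)$; then the frequency satisfies $-\log P_j=S_{j-1}+\eta_j$, the perturbed random walk of the title. Binomial concentration of $X_{n,j}$ around $nP_j$ gives $\log X_{n,j}\approx(\log n-S_{j-1}-\eta_j)^+$ on the boxes that matter, so
$$T_n \ \approx \ R_n \ := \ \sum_{j\ge 1}(\log n-S_{j-1}-\eta_j)^+.$$
The limit law of $(R_n-b_n)/a_n$ is then read off from the CLT/stable limit theorems for such integrated perturbed random walks already developed in \cite{GIR, Iks11}: the centering $b_n$ of \eqref{26} is exactly $\me R_n$ up to $o(a_n)$ terms (via a Wald-style computation in which $\int_0^z\mmp\{|\log(1-W)|>x\}{\rm d}x=\me[\eta_1\wedge z]$ absorbs the perturbation), while the scaling $a_n$ is dictated by the domain of attraction of $|\log W|$. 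Part II reduces to Part I(a) because \eqref{density_cond} forces monotonicity of $f$ on neighbourhoods of $0$ and $1$, $\sigma^2<\infty$, and $\me|\log(1-W)|<\infty$; the last in turn makes the perturbation correction in \eqref{26} $O(\log n)$, which is absorbed into the simpler centering $(2\mu)^{-1}\log^2 n$.

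The technical crux is the uniform replacement $T_n\approx R_n$: the binomial fluctuations $\log X_{n,j}-\log(nP_j)$ for bulk boxes, the boundary boxes with $nP_j=O(1)$, and the tail boxes past the first passage of $S_j+\eta_{j+1}$ above $\log n$ must together contribute only $o(a_n)$ in probability. The density hypothesis on $W$ in Part I (and the sharper \eqref{density_cond} in Part II) is tailored exactly so that the requisite second- or fractional-moment bounds go through; once this estimate is established, invoking the perturbed-random-walk limit theorems supplies the rest.
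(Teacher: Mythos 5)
Your two-stage strategy and the perturbed-random-walk reduction are exactly what the paper does: \eqref{p_n_def} is replaced for $\log O_n$ with an $o(\log^{3/2}n)$ error via the Pittel identity $\log T_n-\log O_n=\sum_p\log p\sum_{s\geq 1}(D_{n,p^s}-1)^+$, bounded through a regenerative recursion for $D_{n,j}$; and the limit law is obtained by writing $\sum_j\log^+(nP_j)=\int_0^{\log n}N^*(y)\,{\rm d}y$ with $N^*$ the counting function of the perturbed random walk $-\log P_j=S_{j-1}+\eta_j$. Modulo the paper's use of poissonization (which cleanly decouples the box occupancies, whereas your binomial concentration must cope with multinomial dependence) the plan agrees.

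There are, however, two concrete gaps. First, the claim that Part II reduces to Part I(a) because \eqref{density_cond} ``forces monotonicity of $f$ on neighbourhoods of $0$ and $1$'' is false: a bound $f(x)\le Cx^{-\alpha}(1-x)^{-\alpha}$ with $\alpha\in[0,1)$ says nothing about monotonicity (a bounded oscillating density near the endpoints satisfies \eqref{density_cond} with $\alpha=0$). The paper handles Part II by a separate, direct estimate of $k\sum_{j}\mmp\{A_n=jk\}$ using only the pointwise bound on $f$ together with Stirling-type inequalities for ratios of Gamma functions; this is Lemma \ref{divis_bin}(ii). Your deduction that \eqref{density_cond} implies $\sigma^2<\infty$ and $\nu<\infty$, and hence the simplified centering, is correct, but the approximation step for Part II cannot be borrowed from Part I(a).

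Second, the integrated limit theorem for the perturbed random walk — the convergence in distribution of $\frac{1}{xc(x)}\int_0^x\bigl(N(y)-{\tt m}^{-1}(y-r(y))\bigr){\rm d}y$ to $\int_0^1 Z(t)\,{\rm d}t$ — is not ``already developed'' in \cite{GIR, Iks11}; it is Theorem \ref{main1} of this paper and is proved here from scratch (via Bingham's functional CLT in the $M_1$-topology, continuity of integration, and a careful decomposition of $M(y)-{\tt m}^{-1}(y-r(y))$ into near-diagonal and far-diagonal parts $T_1,T_2$). Your Wald-style remark that $\int_0^z\mmp\{|\log(1-W)|>x\}\,{\rm d}x=\me[\eta_1\wedge z]$ correctly explains the form of the centering $b_n$ in \eqref{26}, but it does not address the fluctuation analysis, which is where the real work in that theorem lies.
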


In particular, these conditions cover all bounded densities, and
all {\rm beta}$(a,b)$ densities with arbitrary parameters $a,b>0$.
Following an approach exploited by previous authors we derive our
extension of the Erd\H{o}s-Tur\'an law in two steps. We first show
that the accompanying quantity $\log T_n$ yields a good
approximation to $\log O_n$, where
\begin{equation}\label{p_n_def}
T_n:=\prod_{r=1}^n r^{K_{n,r}},
\end{equation}
is the product of cycle lengths of $\Pi_n$.
Then we study the weak convergence of $\log T_n$.


Functional $\log T_n$ is an instance of a {\it separable
statistic} of the form  $\sum_r K_{n,r}h(r)$ (the terminology is
borrowed from \cite{Med, Mirah}, where it was used in the context of
occupancy problems). Functionals $K_{n,r}$ and $K_n$ are
themselves of this kind with some indicator functions $h$,
but for $\log T_n$ the function $h$ is unbounded. For Ewens'
permutations quite general separable statistics were studied by
Babu and Manstavi\v{c}ius, see e.g. \cite{Man1, Man2}. 


\begin{thm}\label{main2} If $W$ satisfies the moment conditions required, respectively, in
parts {\rm (a)}, {\rm (b)} and {\rm (c)}   of {\rm ~Theorem
\ref{main}}, then the conclusions of parts {\rm (a)}, {\rm (b)}
and {\rm (c)} hold with $\log O_n$ replaced by $\log T_n$, without
the assumption regarding the existence of density of $W$.
\end{thm}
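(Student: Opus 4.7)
The plan is to reduce $\log T_n=\sum_r K_{n,r}\log r$ to a functional of the perturbed random walk generated by the stick-breaking, and then invoke first-passage limit theorems of the type developed in \cite{Iks11,GIM,GIM2,GINR,GIR}. Let $S_k:=\sum_{i=1}^k|\log W_i|$ be the additive renewal walk, $-\log P_k=S_{k-1}+|\log(1-W_k)|$ the associated perturbed walk, and $\nu(x):=\inf\{k\ge1:S_k>x\}$ its first-passage time. In the Bernoulli sieve picture, $\log T_n=\sum_{j\ge1}\mathbf{1}_{\{\eta_j\ge1\}}\log\eta_j$, where $\eta_j$ is the number of balls landing in box $j$ of frequency $P_j$.

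The first and principal step is the approximation
$$\log T_n\;=\;\sum_{k=1}^{\nu(\log n)}\bigl(\log n-S_{k-1}-|\log(1-W_k)|\bigr)+R_n,$$
with a remainder $R_n$ of order smaller than the scale $a_n$. The heuristic is that, conditionally on $(P_j)$, $\eta_j$ is $\mathrm{Binomial}(n,P_j)$, so that $\log\eta_j\approx\log(nP_j)=\log n-S_{j-1}-|\log(1-W_j)|$ whenever $nP_j\gg1$, while the boxes with $nP_j=O(1)$ contribute $O(1)$ each and are few in number. To make this rigorous one needs conditional tail bounds such as $\mmp\{|\eta_j-nP_j|>\varepsilon nP_j\mid P_j\}\le(\varepsilon^2 nP_j)^{-1}$, combined with uniform control on the count of boundary boxes (where $nP_j$ is bounded) and on the negligible tail beyond $\nu(\log n)$, via the auxiliary lemmas collected in the Appendix. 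This is the main obstacle: because $h(r)=\log r$ is unbounded, per-box errors must be controlled uniformly and summed, rather than applied in isolation, and the control must interact correctly with the perturbed random walk structure.

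With the reduction in hand, decompose
$$\sum_{k=1}^{\nu(\log n)}\bigl(\log n-S_{k-1}-|\log(1-W_k)|\bigr)\;=\;\sum_{k=0}^{\nu(\log n)-1}(\log n-S_k)\;-\;\sum_{k=1}^{\nu(\log n)}|\log(1-W_k)|.$$
For the first sum, writing $x-S_k=(\nu(x)-k)\mu-(S_k-k\mu)+(x-\mu\nu(x))$ with $x=\log n$ and applying the renewal approximation $\nu(x)\sim x/\mu$ together with the functional invariance principle for $(S_k-k\mu)$ yields mean $\sim(2\mu)^{-1}\log^2 n$ and Gaussian fluctuations of scale $\sqrt{\sigma^2\log^3 n/(3\mu^3)}$ in case (a); in cases (b) and (c) the stable functional limit for $(S_k)$ yields the $\alpha$-stable fluctuations directly. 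For the second sum, since $\{\nu(x)\ge k\}$ is measurable with respect to $W_1,\dots,W_{k-1}$, a Wald-type identity together with Fubini gives exactly the integral correction in \eqref{26} as its mean, while its centered part is of strictly smaller order than $a_n$ and thus negligible at this scale.

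Since this entire program is carried out through the Bernoulli sieve (equivalently, through the Markov chain $M_n$ or the additive renewal representation), it depends on $W$ only through tail and moment conditions, never through a density. That is precisely why Theorem \ref{main2} is free of the absolute-continuity assumption of Theorem \ref{main}: the density there is used only to control $|\log O_n-\log T_n|=o(\log^{3/2}n)$ in Section \ref{approx}, which is the separate ingredient that lets $\log O_n$ inherit the limit law of $\log T_n$.
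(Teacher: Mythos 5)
Your proposal correctly identifies the strategy of the paper — approximate $\log T_n$ by a linear functional of the perturbed random walk $(-\log P_k)$ and then invoke first-passage limit theory — and you also correctly observe that the density hypothesis enters only through the $O_n$-vs-$T_n$ comparison in Section~\ref{approx}. However, two of the steps you declare are exactly the ones the paper has to work for, and as written they have real gaps.

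First, the reduction $\log T_n=\sum_{k\le\nu(\log n)}(\log n-S_{k-1}-|\log(1-W_k)|)+R_n$ with $R_n=o(a_n)$ is flagged by you as ``the main obstacle'' and then not resolved; the Appendix lemmas you point to (the recursion for $D_{n,j}$ and the binomial-tail estimates) are there to control $\log T_n-\log O_n$, not the occupancy fluctuations of $\log T_n$ itself. The paper instead \emph{poissonizes}: it replaces $n$ by a Poisson$(t)$ number of balls, sets $V(t)=\log T_{\pi_t}$, and proves the two key facts $\me(V(t)\mid(P_k))=\int_0^{\log t}N^\ast(x)\,{\rm d}x+O(\log t)$ and ${\rm Var}(V(t)\mid(P_k))=O(\log t)$. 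These hinge on precise asymptotics of $f_1(t)=\me\log^+\pi_t$ and $h(t)={\rm Var}(\log^+\pi_t)$ (that $f_1(t)-\log t\to0$ and $h(t)\to0$), which are where the unboundedness of $h(r)=\log r$ is actually tamed. Depoissonization then uses monotonicity of $n\mapsto\log T_n$. Without poissonization the conditional law of the ball count is binomial rather than Poisson, the multinomial correlations across boxes must be faced directly, and a Chebyshev estimate of the form you suggest only controls one box at a time; you would still need the summed variance bound, which is what ${\rm Var}(V(t)\mid(P_k))=O(\log t)$ provides.

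Second, your treatment of $\sum_{k\le\nu(\log n)}|\log(1-W_k)|$ by ``a Wald-type identity together with Fubini,'' with the centered part ``of strictly smaller order than $a_n$,'' implicitly uses moment control on $\eta=|\log(1-W)|$. But Theorem~\ref{main2} imposes moment conditions only on $|\log W|$; the paper explicitly allows $\nu=\me|\log(1-W)|$ to be infinite. Your decomposition then has an ill-defined mean and the claimed order of the fluctuations has no basis. The paper avoids the issue entirely: Theorem~\ref{main1} never isolates $\sum_{k\le\nu}\eta_k$. It compares $N(y)$ to the conditional mean $M(y)=\sum_{k\ge0}F(y-S_k)$ via the second-moment bound $\me(N(x)-M(x))^2=o(x)$ (Lemma~\ref{aux}(a)), and then compares $\int_0^xM(y)\,{\rm d}y$ to $\mu^{-1}\int_0^x(y-r(y))\,{\rm d}y$ by sandwiching with $\sup/\inf$ of $\rho(\cdot)-\mu^{-1}(\cdot)$; the only feature of $\eta$ that enters is its distribution function through $r$, with no integrability required. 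So the decomposition you propose is not merely less elegant — it genuinely fails in part of the regime the theorem covers, and the $M(y)$ device is what circumvents this.
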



\paragraph{Example: beta distributions}
Assuming $W\stackrel{d}{=}{\rm beta}(\theta, 1)$ we have
$\mu=\theta^{-1}$, $\sigma^2=\theta^{-2}$ and
$$\lin {\int_0^{\log n}\int_0^z \mmp\{|\log (1-W)|>x\}{\rm d}x{\rm d}z\over
\log^{3/2}n}=0,$$
since the numerator is $O(\log n)$.
Application of Theorem \ref{main2} (a) yields
$${\log T_n-(\theta/2)\log^2 n\over \sqrt{(\theta/3)\log^3 n}} \ \dod \ \mathcal{N}(0,1), \ \ n\to\infty,$$
which was previously obtained in  \cite{AT92b}, equation (34).

\section{Approximation of $\log O_n$ by $\log T_n$}\label{approx}

For $j\in [n]$
set $$D_{n,j}:=\sum_{r\leq
n,\,j|r}K_{n,r}=\sum_{r=1}^{\lfloor\,n/j\rfloor\,}K_{n,\,rj}.$$ For the later use we
need appropriate bounds for the expectation
$\me(D_{n,j}-1)^+$.
\begin{lemma}\label{dnk_mom}
Under the assumptions of {\rm ~Theorem \ref{main}} the asymptotic relations
\begin{eqnarray}\label{assert1}
\me (D_{n,j}-1)^{+}=O\left(\frac{\log n}{j}\right),\\
\label{assert2}
\me (D_{n, j}-1)^{+}=O\left(\frac{\log^2 n}{j^2}\right)
\end{eqnarray}
hold  uniformly in  $j\in[n]$.

\begin{proof}
Define $D^{(1)}_{n,j}:=\sum_{r=1}^{\lfloor
n/j\rfloor\,-1}K_{n,rj}$. It is obvious that $ (D_{n,j}-1)^{+}\leq
D^{(1)}_{n,j}$.

Let $A_n$ be  the length of the last cycle of $\Pi_n$, with
distribution $\mmp\{A_n=j\}=q(n,j)$ as in (\ref{dec-ma}). One can
check that the bivariate array $(D^{(1)}_{n,j})$
satisfies the  distributional recurrence
\begin{eqnarray}\label{rec_d1nk}
D^{(1)}_{n,j}&=&0,\;\;n<j,\nonumber\\
D^{(1)}_{n,j}&\od & 1_{\{j | A_n,\,j\leq A_n\leq n-j
\}}+\hat{D}^{(1)}_{n-A_n, j},\;\;n\geq j,
\end{eqnarray}
where the variables $\hat{D}^{(1)}_{n,k}$ are
 assumed independent of
$\Pi_n$ and marginally distributed like $D^{(1)}_{n,k}$ for all
$n,k\in\mn$. Taking expectations yields
$$
\me D^{(1)}_{n,j}=\sum_{r=1}^{\lfloor
\,n/j\rfloor\,-1}\mmp\{A_n=rj\}+\sum_{i=j}^{n}\mmp\{n-A_n=i\}\me
D^{(1)}_{i,j}\;\;~~{\rm for~} n\geq j,
$$
and $\me D^{(1)}_{n,j}=0$ for $n<j$.

By Lemma \ref{divis_bin}
\begin{equation}\label{inhomogeneous_term1}
j\sum_{r=1}^{\lfloor n/j\rfloor -1}\mmp\{A_n=rj\}=O(1),\;\;j\leq
n,\;\;n\in\mn.
\end{equation}
Now relation \eqref{assert1} follows by the virtue of part (i) of
Lemma \ref{gne_lemma} and Lemma \ref{recursion_l} with $c_j=j$.

To prove the second assertion \eqref{assert2}, note that
\begin{eqnarray*}
(D_{n,j}-1)^{+}&=&(D_{n,j}-1)^{+}1_{\{K_{n,\lfloor n/j\rfloor j}=0\}}=(D^{(1)}_{n,j}-1)^{+}1_{\{K_{n,\lfloor n/j\rfloor j}=0\}}\\
&\leq&(D^{(1)}_{n,j}-1)^{+}\leq
D^{(1)}_{n,j}(D^{(1)}_{n,j}-1)/2=:D^{(2)}_{n,j},
\end{eqnarray*}
holds almost surely. Squaring relation \eqref{rec_d1nk} and using
\eqref{inhomogeneous_term1} and \eqref{assert1} yield
$$
\me D^{(2)}_{n,j}= O(j^{-2}\log
n)+\sum_{i=j}^{n}\mmp\{n-A_n=i\}\me D^{(2)}_{i,j},\;\;n\geq j, \
j\in\mn,
$$
Finally, application of part (ii) of Lemma \ref{gne_lemma} and Lemma \ref{recursion_l} with $c_j=j^2$ establish \eqref{assert2}, as wanted.
\end{proof}
\end{lemma}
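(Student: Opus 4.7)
The plan is to peel off the trivial case where $D_{n,j}$ is zero or one and work instead with the slightly truncated sum
$D^{(1)}_{n,j} := \sum_{r=1}^{\lfloor n/j \rfloor - 1} K_{n,rj}$, which drops the contribution from the largest multiple of $j$. Since at most one cycle in $\Pi_n$ can have length $\lfloor n/j\rfloor j$ (that length already exceeds $n/2$ for all $j\le n$), the quantity $D_{n,j}$ exceeds $D^{(1)}_{n,j}$ by at most one, and therefore $(D_{n,j}-1)^+ \le D^{(1)}_{n,j}$. This reduces both tasks to estimating $\mathbb{E} D^{(1)}_{n,j}$ (for the first bound) and $\mathbb{E} D^{(1)}_{n,j}(D^{(1)}_{n,j}-1)$ (for the second).

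Next I would invoke regeneration. Conditionally on the length $A_n$ of the last cycle (whose law is given by the decrement matrix $q(n,\cdot)$), the remainder of $\Pi_n$ is a distributional copy of $\Pi_{n-A_n}$. Splitting $D^{(1)}_{n,j}$ into the contribution of the last cycle plus the contribution of the rest yields the distributional recursion
\[
D^{(1)}_{n,j} \stackrel{d}{=} \mathbb{1}_{\{j\mid A_n,\,j\le A_n\le n-j\}} + \widehat{D}^{(1)}_{n-A_n,\,j}, \qquad n\ge j,
\]
with an independent copy on the right. Taking expectations gives the renewal-type equation
\[
\mathbb{E} D^{(1)}_{n,j} = \sum_{r=1}^{\lfloor n/j\rfloor -1}\mathbb{P}\{A_n=jr\} + \sum_{i=j}^{n}\mathbb{P}\{n-A_n=i\}\,\mathbb{E} D^{(1)}_{i,j}.
\]
The problem is then to solve this renewal equation uniformly in $j$, given a sufficiently strong bound on the inhomogeneous (forcing) term.

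This is where the main obstacle lies: I need the uniform estimate
\[
k\sum_{r=1}^{\lfloor n/k\rfloor-1}\mathbb{P}\{A_n = rk\} = O(1), \qquad k\le n,
\]
which is essentially the statement that the last-cycle length $A_n$ is sufficiently spread-out that its mass on any arithmetic progression $k\mathbb{Z}$ decays like $1/k$. The proof of this estimate is where the density assumptions of Theorem \ref{main} enter: one needs to use the explicit formula $q(n,m)=\binom{n}{m}\mathbb{E}[W^{n-m}(1-W)^m]/(1-\mathbb{E}W^n)$ together with smoothness of the density of $W$ to bound how often the binomial-moment weights can coincidentally pile up on a sparse subprogression. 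I would expect to handle this with a Laplace-type/local-limit argument in an appendix lemma (indeed, this is what the paper packages as Lemma \ref{divis_bin}). Granting this estimate, a standard renewal bound (the proposition from \cite{Gne}, combined with an auxiliary result like Lemma \ref{recursion_l} with weight sequence $c_j=j$) gives $\mathbb{E} D^{(1)}_{n,j} = O(\log n/j)$, proving \eqref{assert1}.

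For the sharper bound \eqref{assert2}, I would sharpen the initial reduction: because $K_{n,\lfloor n/j\rfloor j}\in\{0,1\}$, the event $(D_{n,j}-1)^+>0$ forces $K_{n,\lfloor n/j\rfloor j}=0$, and on this event $(D_{n,j}-1)^+=(D^{(1)}_{n,j}-1)^+\le \tfrac12 D^{(1)}_{n,j}(D^{(1)}_{n,j}-1)$. Squaring the recursion, expanding the cross term, and substituting the already-proved $O(\log n/j)$ bound produces a new renewal equation for $\mathbb{E}[D^{(1)}_{n,j}(D^{(1)}_{n,j}-1)/2]$ whose forcing term is $O(\log n/j^2)$. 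Applying the same renewal machinery, this time with weight sequence $c_j=j^2$, yields the $O(\log^2 n/j^2)$ bound and completes the proof.
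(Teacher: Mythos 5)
Your proposal reproduces the paper's argument essentially step for step: the same truncation to $D^{(1)}_{n,j}$ (with the same justification via $K_{n,\lfloor n/j\rfloor j}\in\{0,1\}$), the same regenerative recurrence driven by the last-cycle length $A_n$, the same reduction of the forcing term to the divisibility estimate proved in Lemma~\ref{divis_bin}, the same invocation of Proposition~3 of \cite{Gne} together with Lemma~\ref{recursion_l} (with weights $c_j=j$ and $c_j=j^2$ respectively), and the same second-moment refinement via $(D^{(1)}_{n,j}-1)^+\le D^{(1)}_{n,j}(D^{(1)}_{n,j}-1)/2$. The proof is correct and matches the paper's approach.
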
 
%
The following estimate of the difference $\log T_n-\log O_n$
generalizes Lemma 4 in \cite{AT92b}.
\begin{lemma}\label{approx_lemma}
Under the assumptions of {\rm ~Theorem \ref{main}} the following asymptotic relations hold
$$\me \left(\log T_n-\log O_n\right)=O\left(\log n \log\log n\right), \ \ n\to\infty.$$
\begin{proof}
We start with a known representation (p.~289 in \cite{Pittel})
$$
\log T_n - \log O_n = \sum_{p\in\mathcal{P}}\log p\sum_{s\geq 1}
(D_{n,\,p^s}-1)^+,
$$
where $\mathcal{P}$ denotes the set of prime numbers, which
implies
\begin{eqnarray*}
\me \left(\log T_n-\log O_n\right)&=& \sum_{p\in\mathcal{P},\,s\geq 1}\log p \,\me (D_{n,\,p^s}-1)^{+}\\
 &\leq& \sum_{p\in\mathcal{P},\, p\leq\log n} \log p\, \me (D_{n,p}-1)^{+}+\sum_{p\in\mathcal{P},\,s\geq 2,\, p^s\leq\log n} \log p\,
 \me (D_{n,p^s}-1)^{+}\\
 &+& \sum_{j>\log n}\log j\, \me (D_{n,j}-1)^{+}=:S_1(n)+S_2(n)+S_3(n).
\end{eqnarray*}
Applying \eqref{assert1} along with Theorem 4.10 in \cite{Apostol} which
states that
$$\sum_{p\in\mathcal{P},\, p\leq x}{\log p\over p} = \log
x + O(1), \ \ x\to\infty,$$ proves $S_1(n)=O(\log n\log\log n)$. Using
\eqref{assert1} again yields $S_2(n)=O(\log n)$. Finally, from
\eqref{assert2} we infer $S_3(n)=O(\log n\log\log n)$. The proof is
complete.
\end{proof}
\end{lemma}

\section{Weak convergence of $\log T_n$}\label{logpn_asympt}

To prove Theorem \ref{main2} we shall exploit a  strategy as in
\cite{GIM} (see also \cite{Iks11}), which amounts to connecting
the asymptotics of $\log T_n$ (as $n\to\infty$) with that of the
`small frequencies' $P_k$ (as $k\to\infty$). Since the process
$(\log P_k)_{k\in\mn}$ defined by \eqref{17} is a particular {\it
perturbed random walk}, we start in Subsection \ref{prw} with
developing necessary backgrounds on the perturbed random walks.
These results are further specialized to $\log P_k$ in Subsection
\ref{pr}, which eventually allows to complete the proof of Theorem
\ref{main2}.

\subsection{Results for perturbed random walks}\label{prw}

Let $(\xi_k, \eta_k)_{k\in\mn}$ be independent copies of a random
vector $(\xi, \eta)$ with arbitrarily dependent components $\xi>
0$ and $\eta\geq 0$. We assume that the law of $\xi$ is
nondegenerate and that the law of $\eta$ is not the Dirac mass at $0$.
Set $F(x):=\mmp\{\eta\leq x\}$ and $r(x):=\int_0^x(1-F(y)){\rm
d}y$.

For $(S_k)_{k\in\mn_0}$ a random walk with $S_0=0$ and increments
$\xi_k$, the sequence $(T_k)_{k\in\mn}$ with
$$T_k:=S_{k-1}+\eta_k, \ \ k\in\mn,$$ is called a {\it perturbed random
walk}. Since $\underset{k\to\infty}{\lim} T_k =\infty$ a.s., there
is some finite number
$$N(x):=\#\{k\in\mn: T_k\leq x\}, \ \ x\geq 0,$$
of sites visited on the interval $[0,x]$. Set also
$$\rho(x):=\#\{k\in\mn_0: S_k\leq x\}\ = \ \inf\{k\in\mn: S_k>x\},
\ \ x\geq 0,$$ and $$M(x):=\sum_{k\geq 0}\me \left(1_{\{T_{k+1}\leq
x\}}\big|S_k\right)=\sum_{k\geq 0}F(x-S_k), \ \ x\geq 0.$$

The main result of this subsection is given next.
\begin{thm}\label{main1}
Assume that ${\tt m}:=\me \xi<\infty$ and $${\rho(x)-{\tt
m}^{-1}x\over c(x)} \ \dod \ Z, \ \ x\to\infty.$$ Then
$$I(x):={\int_0^x (N(y)-{\tt m}^{-1}(y-r(y))){\rm d}y\over xc(x)} \
\dod \ \int_0^1 Z(y){\rm d}y=:X, \ \ x\to\infty,$$ where
$(Z(t))_{t\geq 0}$ is a stable L\'{e}vy process such that $Z(1)$
has the same law as $Z$.
\end{thm}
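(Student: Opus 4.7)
The strategy is to reduce the statement about the perturbed counter $N$ to the asymptotics of the unperturbed counter $\rho$ that is assumed, by showing that the centering $m^{-1}\int_0^x(y-r(y))\,{\rm d}y$ exactly absorbs the conditional mean of the perturbation given the renewal sequence $(S_k)_{k\ge 0}$. The principal term will then be $\int_0^x(\rho(y)-y/m)\,{\rm d}y$, whose limit comes from a functional form of the hypothesis.

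First I would promote the marginal convergence $(\rho(x)-x/m)/c(x)\dod Z$ to the Skorokhod convergence $\bigl((\rho(xt)-xt/m)/c(x)\bigr)_{t\in[0,1]}\Rightarrow(Z(t))_{t\in[0,1]}$, where $(Z(t))$ is the stable L\'evy process with $Z(1)\od Z$; this is standard for renewal counting processes in the domain of attraction of a stable law, using self-similarity for finite-dimensional convergence and the monotonicity of $\rho$ together with J1-tightness for renewal counters. Applying the continuous mapping theorem to $\varphi\mapsto\int_0^1\varphi\,{\rm d}s$ then yields
\[\frac{1}{xc(x)}\int_0^x(\rho(y)-y/m)\,{\rm d}y=\int_0^1\frac{\rho(xs)-xs/m}{c(x)}\,{\rm d}s\;\dod\;\int_0^1 Z(s)\,{\rm d}s=X.\]

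The link to the perturbed process comes from the pathwise identity
\[\int_0^x\bigl(\rho(y)-N(y)\bigr)\,{\rm d}y=\sum_{k\ge 0}\bigl[(x-S_k)^+-(x-S_k-\eta_{k+1})^+\bigr].\]
Since $\me[(a-\eta)^+]=a-r(a)$, the conditional mean of the right-hand side given $(S_k)$ equals $\sum_{k:\,S_k\le x}r(x-S_k)$. Integration by parts against ${\rm d}\rho(u)={\rm d}u/m+{\rm d}\phi(u)$ with $\phi(u):=\rho(u)-u/m$ rewrites this as
\[\sum_{k:\,S_k\le x}r(x-S_k)=m^{-1}\int_0^x r(y)\,{\rm d}y+\int_0^x\phi(x-v)\bigl(1-F(v)\bigr)\,{\rm d}v,\]
so the required centering $m^{-1}\int_0^x r(y)\,{\rm d}y$ appears for free, leaving only the remainder $E_1(x):=\int_0^x\phi(x-v)(1-F(v))\,{\rm d}v$. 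The residual conditional fluctuation $E_2(x):=\int_0^x(\rho(y)-N(y))\,{\rm d}y-\sum_{k:\,S_k\le x}r(x-S_k)$ is, given $(S_k)$, a sum of independent mean-zero contributions whose $k$-th variance is bounded by $\mathrm{Var}(\eta\wedge(x-S_k))\le 2(x-S_k)r(x-S_k)$. A conditional Chebyshev inequality combined with the renewal bound $\sum_{k:\,S_k\le x}g(x-S_k)=O_p\bigl(\int_0^x g(y)\,{\rm d}y\bigr)$ gives $E_2(x)^2=O_p\bigl(\int_0^x y\,r(y)\,{\rm d}y\bigr)$; for $E_1$ the uniform $O_p(c(\cdot))$ control of $\phi$ together with $\int_0^\infty(1-F)\,{\rm d}v=\me\eta$ suffices when $\me\eta<\infty$. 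Once $E_1(x),E_2(x)=o_p(xc(x))$ are verified, assembling the pieces yields
\[\int_0^x\bigl(N(y)-m^{-1}(y-r(y))\bigr)\,{\rm d}y=\int_0^x(\rho(y)-y/m)\,{\rm d}y+o_p(xc(x)),\]
and division by $xc(x)$ closes the argument.

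The main obstacle is the joint control of $E_1$ and $E_2$ when $\eta$ has a tail so heavy that $\me\eta=\infty$: the naive bounds $E_1(x)=O_p(c(x)r(x))$ and $E_2(x)^2=O\bigl(\int_0^x yr(y)\,{\rm d}y\bigr)$ may then fail to beat $(xc(x))^2$. Handling this regime requires a careful truncation of $\eta$ at a level calibrated to $c(x)$ and a separate treatment of the few exceptionally large values, leveraging the regular variation implicit in the hypothesis on $c$.
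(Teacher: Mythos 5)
Your first step---upgrading the one-dimensional hypothesis on $\rho$ to a functional limit theorem, integrating, and obtaining $\int_0^x(\rho(y)-{\tt m}^{-1}y)\,{\rm d}y\big/(xc(x))\dod X$---is exactly the content of the paper's Lemma \ref{aux1} (the paper cites Bingham's Theorem~1b and uses the $M_1$ topology rather than $J_1$; for $\alpha<2$ the convergence of the step process $\rho$ to a jump process is in $M_1$, not $J_1$, but since you only apply the integral functional this distinction is harmless). From there the two arguments diverge genuinely. The paper works with the smoothed counter $M(x)=\sum_{k\ge 0}F(x-S_k)$: it invokes the $L^2$ estimate $\me(N(x)-M(x))^2=o(x)$ (Lemma \ref{aux}(a), imported from \cite{GIM}), then rewrites $F(y)+M(y)-{\tt m}^{-1}(y-r(y))=\int_0^y(\rho(y-z)-{\tt m}^{-1}(y-z))\,{\rm d}F(z)$ and splits this convolution at $y^\delta$. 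You instead use the clean pathwise identity $\int_0^x(\rho(y)-N(y))\,{\rm d}y=\sum_{k\ge0}\bigl[(x-S_k)^+-(x-S_k-\eta_{k+1})^+\bigr]=\sum_{k:\,S_k\le x}\eta_{k+1}\wedge(x-S_k)$, compute the conditional mean to produce the centering ${\tt m}^{-1}\int_0^x r$, and control the residuals $E_1,E_2$ by integration by parts and a conditional Chebyshev bound. This is more self-contained (no appeal to \cite{GIM}) and conceptually transparent; the paper's route, in exchange, needs the external $L^2$ lemma but sidesteps any moment condition on $\eta$.

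That last point is where your argument has a genuine gap, and you flag it yourself: your bounds $E_1(x)=O_p(c(x)\,\me\eta)$ and $E_2(x)^2=O_p\bigl(\int_0^x y\,r(y)\,{\rm d}y\bigr)$ require $\me\eta<\infty$, whereas the theorem assumes nothing about the moments of $\eta$, and in the intended application $\eta=|\log(1-W)|$ is not assumed to be integrable (note that the centering sequence $b_n$ in \eqref{26} deliberately keeps $\int_0^{\log n}r^\ast$ rather than replacing it by $\nu\log n$). The paper avoids this by bounding $T_1$ and $T_2$ against $\sup_{[0,x]}(\rho-{\tt m}^{-1}\cdot)/c(x)$ (Lemma \ref{aux}(b)) multiplied by Ces\`aro averages such as $x^{-1}\int_1^x(1-F(y^\delta))\,{\rm d}y$ and $x^{-1}\int_1^x(F(y)-F(y^\delta))\,{\rm d}y$, which tend to $0$ for \emph{any} distribution function $F$, integrable or not. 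Your proposed fix---truncating $\eta$ at a level ``calibrated to $c(x)$'' and ``leveraging the regular variation implicit in the hypothesis on $c$''---does not obviously close the gap: the normalization $c$ encodes the tail of $\xi$, not of $\eta$, and there is no regular-variation hypothesis on $\eta$ at all. To make your route cover the stated theorem you would need a different device (e.g.\ splitting the range of $\eta_{k+1}$ at $(x-S_k)^\delta$ and invoking a Ces\`aro/renewal argument in the spirit of the paper's $T_1,T_2$ split). Also, a minor slip in the finite-mean computation: the integration by parts produces an extra boundary term $-r(x)\phi(0)$ that you dropped; it is $O(\me\eta)$ and harmless, but should be recorded.
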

\begin{rem}
It is known (see Proposition 27 in \cite{Neg}) that $c(x)\ \sim \
x^\beta \ell_1(x)$ for some $\beta\in [1/2,1)$ and some slowly
varying $\ell_1$, where $\beta$ and $\ell_1$ depend on the
distribution of $\xi$. Furthermore, if $\beta=1/2$ then either
$\ell_1(x)={\rm const}$ or $\lix \ell_1(x)=\infty$. Thus, in any
case,
\begin{equation}\label{18}
{x\over c^2(x)}=O(1), \ \ x\to\infty.
\end{equation}
\end{rem}

The proof of Theorem \ref{main1} relies heavily upon the
following.
\begin{lemma}\label{aux1}
Under the assumption and notation of{\rm~ Theorem \ref{main1}},
\begin{equation}\label{15}
J(x):={\int_0^x (\rho(y)-{\tt m}^{-1}y){\rm d}y\over xc(x)} \ \dod
\ X, \ \ x\to\infty.
\end{equation}
\end{lemma}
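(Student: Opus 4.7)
The plan is to upgrade the hypothesized one-dimensional weak convergence of $\rho$ to a functional limit theorem on the Skorokhod space $D[0,1]$, and then apply the continuous mapping theorem to the integration functional $g\mapsto\int_0^1 g(t)\,{\rm d}t$, after rescaling the integral by the substitution $y=xt$.

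The main step is to establish the functional convergence
$$R_x(t):=\frac{\rho(xt)-{\tt m}^{-1}xt}{c(x)} \ \dod \ (Z(t))_{t\in[0,1]}, \ \ x\to\infty,$$
in $D[0,1]$ equipped with the $J_1$-topology. By the Remark, $c$ is regularly varying of index $\beta\in[1/2,1)$, and the hypothesis forces $\xi$ to lie in the domain of attraction of a stable law of index $1/\beta$. The classical functional limit theorem for sums of i.i.d.\ summands then yields weak convergence of the centered and scaled partial-sum process $((S_{\lfloor yt\rfloor}-{\tt m}yt)/\tilde c(y))_{t\in[0,1]}$ to the corresponding stable L\'evy process, where $\tilde c$ is an asymptotic inverse of $c$. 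Inverting via a continuous mapping theorem for first-passage processes (Whitt-type inversion) produces the displayed functional convergence of $R_x$.

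Given this functional limit theorem, the substitution $y=xt$ gives
$$J(x)=\frac{\int_0^x(\rho(y)-{\tt m}^{-1}y)\,{\rm d}y}{xc(x)}=\int_0^1 R_x(t)\,{\rm d}t=\Phi(R_x),$$
where $\Phi:D[0,1]\to\mr$ denotes the integration functional $\Phi(g)=\int_0^1 g(t)\,{\rm d}t$. Since $\Phi$ is continuous in the $J_1$-topology at every c\`adl\`ag path that is bounded on $[0,1]$ and continuous at the endpoint $t=1$, and the L\'evy process $(Z(t))$ has these properties almost surely, $\Phi$ is continuous at almost every path of $Z$. The continuous mapping theorem then gives
$$J(x)=\Phi(R_x) \ \dod \ \Phi(Z)=\int_0^1 Z(t)\,{\rm d}t=X.$$

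I expect Step 1 to be the main obstacle. In the Gaussian case $\beta=1/2$ the functional convergence of $\rho$ is classical and can even be stated in the uniform topology, but in the stable case $\beta\in(1/2,1)$ the limit $Z$ has jumps, so the convergence of $R_x$ must be phrased in $J_1$, and care is needed in transferring functional convergence from the partial-sum process through the first-passage inverse. Once Step 1 is settled, the remaining arguments reduce to an elementary change of variables and the standard $J_1$-continuity of the integration map.
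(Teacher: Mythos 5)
Your overall strategy matches the paper's: establish a functional limit theorem for the centered and scaled first-passage process and then apply the continuous mapping theorem to the integration functional. The paper obtains the functional convergence directly by citing Theorem~1b of Bingham (1973), whereas you propose to derive it from a FCLT for the partial-sum process plus a Whitt-type inversion; both routes are legitimate in spirit. The important difference is the topology.

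There is a genuine error in your proposal: the functional convergence of $R_x(\cdot)=(\rho(x\cdot)-{\tt m}^{-1}x\cdot)/c(x)$ cannot hold in the $J_1$-topology in the stable case $\beta\in(1/2,1)$. The process $\rho$ is a counting process with unit jumps, so $R_x$ has jumps of size $1/c(x)\to 0$; yet the stable limit $Z$ has nondegenerate jumps. Convergence in $J_1$ would require the jumps of $Z$ to be matched by jumps of comparable size in $R_x$, which is impossible when the prelimit jump sizes vanish. The only mode in which this convergence can hold is $M_1$ (or a weaker topology), where a jump of the limit may be approximated by a steep continuous ascent. This is precisely why the paper states \eqref{16} in the $M_1$-topology, and why its subsequent Remark notes that an upgrade to locally uniform convergence is only available in the Brownian case. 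Your argument is not merely imprecise on this point; as written it invokes a convergence that is false.

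The fix is straightforward and brings you back to the paper's proof: replace $J_1$ by $M_1$ throughout. The inversion step (passing from the partial-sum process to the first-passage process) does produce $M_1$-convergence, and this is exactly the content of Bingham's Theorem~1b that the paper cites. The integration map $g\mapsto\int_0^{\,\cdot}g(t)\,{\rm d}t$ from $D[0,\infty)$ to $D[0,\infty)$ is continuous with respect to $M_1$ as well, and since the limit $Z$ is a.s.\ continuous at $t=1$, the projection at $t=1$ is also a.s.\ a continuity point. With these corrections your argument coincides in substance with the paper's.
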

\begin{proof}
It is known (see Theorem 1b in \cite{Bing73}) that
\begin{equation}\label{16}
W_x(\cdot):={\rho(x\cdot)-{\tt m}^{-1}(x\cdot)\over c(x)}
\Rightarrow \ Z(\cdot), \ \ x\to\infty,
\end{equation}
in $D[0,\infty)$ in the $M_1$-topology. Since integration is a
continuous
operator from $D[0,\infty)$ to $D[0,\infty)$, we have
$$\int_0^1 W_x(y){\rm d}y\ \dod \ \int_0^1 Z(y){\rm d}y, \ \ x\to\infty,$$
which is equivalent to \eqref{15}.
\end{proof}

\paragraph{Remark} When $Z(\cdot)$ is a Brownian motion, the one-dimensional
convergence in \eqref{15} can be upgraded to the functional limit
theorem. Indeed, since $(Z(t))$ is continuous the convergence in
\eqref{16} is equivalent to the locally uniform convergence.
Furthermore, the integration
$z(\cdot)\ \mapsto \ \int_0^{(\cdot)}z(y){\rm d}y$
is continuous w.r.t.\, the locally uniform convergence. Hence, by
the continuous mapping theorem,
$$\int_0^{(\cdot)}W_x(y){\rm d}y\ \Rightarrow \ \int_0^{(\cdot)}Z(y){\rm d}y, \ \ x\to\infty$$
in $D[0,\infty)$.
\vskip0.2cm
Lemma \ref{aux} collects some facts borrowed from
\cite{GIM}.
\begin{lemma}\label{aux}
{\rm (a)} $\me (N(x)-M(x))^2=o(x)$, as $x\to\infty$.\newline
{\rm (b)} Under
the assumption and notation of {\rm ~Theorem \ref{main1}},
$${\underset{y\in [0,x]}{\sup}\,(\rho(y)-{\tt m}^{-1}y )\over
c(x)} \ \dod \ \underset{t\in [0,1]}{\sup}\,Z(t), \ \
{\rm as~~}
x\to\infty,$$ and
$${\underset{y\in [0,x]}{\inf}\,(\rho(y)-{\tt
m}^{-1}y )\over c(x)} \ \dod \ \underset{t\in [0,1]}{\inf}\,Z(t),
\ \
{\rm as~~}x\to\infty.$$
\end{lemma}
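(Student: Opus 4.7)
Both claims are specialisations of results in \cite{GIM}, and I would structure the argument as follows, deferring the renewal-theoretic technicalities to that reference.

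For part (a), the plan is to exploit a conditional independence inside the defect. Writing
$$N(x)-M(x) \;=\; \sum_{k\geq 0}\bigl(\mathbf{1}_{\{T_{k+1}\leq x\}}-F(x-S_k)\bigr),$$
and conditioning on the whole random walk $(S_j)_{j\geq 0}$, the summands become independent and centered, because $\eta_{k+1}$ is independent of $(S_j)_{j\geq 0}$ with law $F$. The tower property then gives
$$\me(N(x)-M(x))^2 \;=\; \me\sum_{k\geq 0} F(x-S_k)(1-F(x-S_k)) \;\leq\; \int_0^x (1-F(x-y))\,dU(y),$$
where $U(y)=\me\rho(y)$ is the renewal function of $(S_k)$. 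The remaining task is to show that this convolution-type integral is $o(x)$.

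My plan here is a truncation at a large level $A$: the portion of the integral over $\{y:x-y\geq A\}$ is bounded by $(1-F(A))\,U(x)=O((1-F(A))\,x)$ via the elementary renewal theorem $U(x)\sim x/{\tt m}$, while the portion over $\{y:x-y<A\}$ is controlled by a uniform bound on the increments $U(x)-U(x-A)$. Dividing by $x$ and then sending $A\to\infty$ uses only $1-F(A)\to 0$. This truncation is the main technical obstacle, since $\me\eta$ is allowed to be infinite so that the crude estimate $\int_0^\infty (1-F(u))\,du<\infty$ is unavailable; controlling the renewal increments in the lattice case also requires some care.

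For part (b), I would apply the continuous mapping theorem directly to the invariance principle \eqref{16}, which asserts $W_x(\cdot)\Rightarrow Z(\cdot)$ in $D[0,\infty)$ equipped with the Skorokhod $M_1$-topology. Restricting to the compact interval $[0,1]$ is admissible because the L\'{e}vy limit $Z$ almost surely has no jump at the endpoint $t=1$. Both the supremum functional $\omega\mapsto \sup_{t\in[0,1]}\omega(t)$ and the infimum functional are continuous on $D[0,1]$ in the $M_1$-topology, since $M_1$-convergence is Hausdorff convergence of completed graphs and Hausdorff convergence on compacts preserves coordinate-wise extrema. Observing that
$$\sup_{t\in[0,1]} W_x(t) \;=\; \frac{\sup_{y\in[0,x]}(\rho(y)-{\tt m}^{-1}y)}{c(x)},$$
the first assertion of (b) follows at once, and an identical argument with $\inf$ in place of $\sup$ yields the second.
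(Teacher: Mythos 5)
The paper itself gives no proof of Lemma \ref{aux}; the sentence immediately preceding it says it ``collects some facts borrowed from \cite{GIM}'', so there is no internal argument to compare against. Judged on its own merits, your part (b) is sound: the invariance principle \eqref{16}, the $M_1$-continuity of the running supremum and infimum functionals (Hausdorff convergence of completed graphs preserves coordinate-wise extrema), and the a.s.\ continuity of the L\'evy limit $Z$ at the deterministic time $t=1$ together justify the continuous-mapping step.

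Part (a), however, contains a genuine flaw at the very first step. You claim that conditioning on the whole walk $(S_j)_{j\geq 0}$ renders the summands centered ``because $\eta_{k+1}$ is independent of $(S_j)_{j\geq 0}$ with law $F$.'' But the model explicitly allows $\xi$ and $\eta$ to be \emph{arbitrarily dependent}, and $(S_j)_{j\geq 0}$ generates the same $\sigma$-field as $(\xi_j)_{j\geq 1}$; since $\eta_{k+1}$ is coupled with $\xi_{k+1}$, it is not independent of the walk, and the correct conditional success probability given the walk is $\mmp\{\eta\leq x-S_k\mid\xi=\xi_{k+1}\}$, not $F(x-S_k)$. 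Thus the summands are conditionally independent but \emph{biased}, and your tower-property computation does not yield the stated identity. The target formula
$\me\bigl(N(x)-M(x)\bigr)^2=\sum_{k\geq 0}\me\bigl[F(x-S_k)\bigl(1-F(x-S_k)\bigr)\bigr]$
is in fact correct, but the right justification is a martingale-difference (orthogonality) argument: with $\mathcal{F}_k=\sigma\bigl((\xi_i,\eta_i)_{i\leq k}\bigr)$ and $Y_k:=\mathbf{1}_{\{T_{k+1}\leq x\}}-F(x-S_k)$, one has $\me[Y_k\mid\mathcal{F}_k]=0$ because $\eta_{k+1}$ is independent of $\mathcal{F}_k$ while $S_k$ is $\mathcal{F}_k$-measurable, and $Y_j$ is $\mathcal{F}_k$-measurable for $j<k$, so the cross terms vanish and only the conditional variances $F(x-S_k)(1-F(x-S_k))$ survive. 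Once the identity is obtained this way, the truncation at level $A$ combined with the elementary renewal theorem and the uniform boundedness of renewal increments does finish part (a) as you describe.
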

\begin{myproof} {\it of} {\rm~ Theorem \ref{main1}}. Applying the Cauchy-Schwarz inequality,
$${\me \bigg(\int_0^x
|N(y)-M(y)|{\rm d}y \bigg)^2\over x^2c^2(x)}\leq {\int_0^x\me
(N(y)-M(y))^2{\rm d}y \over xc^2(x)}={o(x^2)\over x^2}{x\over
c^2(x)},$$
where for the final estimate Lemma \ref{aux}(a) was utilized.
In view of \eqref{18}, the latter expression goes
to $0$,
which implies that
\begin{equation}\label{2}
{\int_0^x (N(y)-M(y)){\rm d}y \over xc(x)}\ \tp \ 0, \ \
x\to\infty.
\end{equation}
Since
$${\int_0^x (N(y)-{\tt m}^{-1}(y-r(y))){\rm d}y\over
xc(x)}= {\int_0^x (N(y)-M(y)){\rm d}y \over xc(x)}+{\int_0^x
(M(y)-{\tt m}^{-1}(y-r(y))){\rm d}y \over xc(x)},$$ we have to
prove that the second summand converges in distribution to $X$.

With $\delta\in (0,1)$ such that $y^\delta=o(c(y))$, write for
$y>1$
\begin{eqnarray*} F(y)+M(y)-{\tt m}^{-1}(y-r(y))&=& \int_0^y
(\rho(y-z)-{\tt m}^{-1}(y-z)){\rm
d}F(z)\\&=&\int_0^{y^\delta}\ldots+\int_{y^\delta}^y\ldots\\&=&T_1(y)+T_2(y).
\end{eqnarray*}
In view of $$T_1(y)\leq (\rho(y)-{\tt
m}^{-1}y)F(y^\delta)+{\tt m}^{-1}y^\delta F(y^\delta)\leq (\rho(y)-{\tt
m}^{-1}y)+{\tt m}^{-1}y^\delta
$$
we have $${\int_0^x T_1(y){\rm d}y\over xc(x)}\leq {\int_0^1
T_1(y){\rm d}y\over xc(x)}+{\int_0^x (\rho(y)-{\tt m}^{-1}y){\rm
d}y\over xc(x)}+{(\delta+1)^{-1} x^\delta\over {\tt m}c(x)} \ \dod \
0+X+0=X,$$ where the last step is justified by Lemma \ref{aux1}
and the choice of $\delta$. Further, $$T_1(y)\geq (\rho(y)-{\tt
m}^{-1}y)-({\rho(y)-{\tt
m}^{-1}y})(1-F(y^\delta))-(\rho(y)-\rho(y-y^\delta)).$$ Since
$${\me \int_1^x (\rho(y)-\rho(y-y^\delta)){\rm d}y\over xc(x)}\leq
{\int_1^x \me \rho(y^\delta){\rm d}y\over xc(x)}\leq
{\me\rho(x^\delta)\over x^\delta}{x^\delta\over c(x)} \ \to \ {\tt
m}^{-1}\cdot 0=0,$$ by the elementary renewal theorem and the
choice of $\delta$, we conclude that $${\int_1^x
(\rho(y)-\rho(y-y^\delta)){\rm d}y\over xc(x)}\ \tp \ 0.$$
Therefore,
\begin{eqnarray*}
{\int_0^x T_1(y){\rm d}y\over x c(x)}&\geq& {\int_1^x
(\rho(y)-{\tt m}^{-1}y){\rm d}y\over xc(x)}-{\underset{0\leq y\leq
x}{\sup}\,(\rho(y)-{\tt m}^{-1}y)\over c(x)}{\int_1^x
(1-F(y^\delta)){\rm d}y\over x}\\&-&{\int_1^x
(\rho(y)-\rho(y-y^\delta)){\rm d}y\over xc(x)}\\&\dod& X-0-0=X,
\end{eqnarray*}
by Lemma \ref{aux1} and Lemma \ref{aux} (b).

Finally,
$$\underset{0\leq z\leq y}{\inf}\,(\rho(z)-{\tt
m}^{-1}z) (F(y)-F(y^\delta))\leq T_2(y)\leq \underset{0\leq z\leq
y}{\sup}\,(\rho(z)-{\tt m}^{-1}z) (F(y)-F(y^\delta))$$ entails
$${\int_0^x T_2(y){\rm d}y\over xc(x)}\leq {\int_0^1 T_2(y){\rm d}y\over xc(x)}+{\underset{0\leq z\leq
x}{\sup}\,(\rho(z)-{\tt m}^{-1}z)\over
c(x)}{\int_1^x(F(y)-F(y^\delta)){\rm d}y\over x} \ \tp \ 0,$$
where the last step follows from Lemma \ref{aux}(b) and the
trivial fact that the last ratio goes to $0$ for any distribution
function $F$. Similarly, $${\int_0^x T_2(y){\rm d}y\over
xc(x)}\geq {\int_0^1 T_2(y){\rm d}y\over
xc(x)}+{\underset{0\leq z\leq x}{\inf}\,(\rho(z)-{\tt
m}^{-1}z)\over c(x)}{\int_1^x(F(y)-F(y^\delta)){\rm d}y\over x} \
\tp \ 0,$$ Putting the pieces together completes the proof.~~~~~~~~~~~~~~~~~~~~~~~~~~~~~~~~~~~~~~~~~~~~~~~~~~~~~~~~~~~~\end{myproof}

\subsection{Proof of Theorem \ref{main2} and Theorem \ref{main}}\label{pr}

{\it Proof of Theorem \ref{main2}}. We shall make use of the
Poissonized version of the occupancy model with random frequencies
$(P_k)$, in which balls are thrown in boxes at epochs of a unit
rate Poisson process $(\pi_t)_{t\geq 0}$. For simplicity we use
notation $V(t)=\log T_{\pi_t}$.

Set
\begin{eqnarray*}\label{rhox} \rho^\ast(x):=\inf \{k\in\mn:
W_1\dots W_k< e^{-x}\}, \ \ x\geq 0,
\end{eqnarray*}
and
\begin{eqnarray*}\label{nx}
N^\ast(x)&:=&\#\{k\in\mn: P_k\geq e^{-x}\}\\&=&\#\{k\in\mn:
W_1\cdots W_{k-1}(1-W_k)\geq e^{-x}\},~~~  x\geq 0.
\end{eqnarray*}

First of all, we need a refined large deviation result for
$(\pi_t)$ itself: for $t>1$,
\begin{equation}\label{ld}
\mmp\{\pi_t\leq (1-\varepsilon_t)t\}\leq
\exp(-t(\varepsilon_t+\log(1-\varepsilon_t)(1-\varepsilon_t)))=:q(t),
\end{equation}
where $\varepsilon_t:=t^{-\beta}$, for any $\beta\in (0,1/2)$.
Note that $\lit q(t)=0$ with $(-\log q(t))\sim t^{1-2\beta}$.
Inequality \eqref{ld} is the Chernoff bound for the Poisson distribution and
follows in a standard way by first applying
Markov's inequality to $e^{-s\pi_t}$ and then minimizing the
right-hand side over $s$.

For $j=1,2$, set $$f_j(t):=\me (\log^+ \pi_t)^j =e^{-t}\sum_{k\geq
2}\log^j k (t^k/ k!), \ \ t\geq 0.$$ These functions are
nondecreasing and differentiable with $f_j(0)=0$ and
\begin{equation}\label{0}
f^\prime_j(0)=0.
\end{equation}
Let us prove that
\begin{equation}\label{li}
\lit (f_1(t)-\log t)=0
\end{equation}
and
\begin{equation}\label{li2}
\lit h(t)=0,
\end{equation}
where $h(t):={\rm Var}(\log^+\pi_t)$. To this end, write
\begin{eqnarray}\label{28}
f_1(t)-\log t \leq \me \log (\pi_t+1)-\log t \leq \log(t+1)-\log
t\leq t^{-1},
\end{eqnarray}
where at the second step Jensen's inequality has been utilized.
Similarly,
\begin{eqnarray}\label{27}
f_2(t)-\log^2 t &\leq& \me \log^2 (\pi_t+1)-\log^2 t\nonumber\\
&\leq& \log^2 (t+1)-\log^2 t\nonumber\\ &\leq& 2t^{-1}\log(t+1).
\end{eqnarray}
Note that we actually work on the set $\{\pi_t\geq 2\}$ and that
the function $t\mapsto \log^2(1+t)$ is concave for $t\geq 2$.

Furthermore, for large enough $t$, and $\varepsilon_t$ as defined
above,
\begin{eqnarray*}
f_1(t)-\log t&\geq& \me (\log^+\pi_t-\log
t)1_{\{\pi_t>(1-\varepsilon_t)t\}}-\log t\mmp\{\pi_t\leq
(1-\varepsilon_t)t\}\\&\geq& \log
(1-\varepsilon_t)\mmp\{\pi_t>(1-\varepsilon_t)t\}-q(t)\log
t=:p(t).
\end{eqnarray*}
and the last expression goes to zero (with rate $t^{-\beta}$), as
$t\to\infty$. Combining this inequality with \eqref{28} proves
\eqref{li}. Note also that
\begin{eqnarray}\label{25}
f_1^2(t)&=&\log^2t+2\log t(f_1(t)-\log t)+(f_1(t)-\log
t)^2\nonumber\\&\geq& \log^2t+2p(t)\log t.
\end{eqnarray}
Hence $$h(t)=f_2(t)-f_1^2(t)\overset{\eqref{27},\eqref{25}}{\leq}
2(t^{-1}\log(t+1)-p(t)\log t)=O(\log t/t^\beta),$$ which proves
\eqref{li2}\footnote{Alternatively, both \eqref{li} and
\eqref{li2} can be deduced from Theorem 4 in \cite{Szpankowski}.
To keep the paper self-contained we prefer to give an elementary
real-analytic argument.}.


The basic observations for the subsequent work are given and
proved next:
\begin{eqnarray}\label{bas}
\me (V(t)|(P_k))&=&\sum_{j\geq 1} f_1(tP_j)\nonumber\\&=&
\int_1^\infty f_1(t/x){\rm d}N^\ast(\log
x)\nonumber\\&=&\int_0^{\log t} (\log t-x) {\rm d}N^\ast(x)+O_P(\log
t)\\&=&\int_0^{\log t}N^\ast(x){\rm d}x+O_P(\log t)\nonumber
\end{eqnarray}
and
\begin{eqnarray}\label{bas100}
{\rm Var}\,(V(t)|(P_k))&=&\sum_{j\geq 1} h(tP_j)\nonumber\\&=&
O_P(\log t),
\end{eqnarray}
where $O_P(\log t)$ means that $O_P(\log t)/\log t$ is bounded in
probability.

The a.s.\,finiteness of the conditional expectation (and even its
integrability) can be justified as follows: $$\me \log T_n\leq
(\log^+n)\me K_n\leq n\log^+n.$$ Hence $\me V(t)\leq \me
\pi_t\log^+\pi_t<\infty$. The integrability of the conditional
variance can be checked similarly.

Since $N^\ast(\log y)\leq \rho^\ast(\log y)$, and $\rho^\ast(\log
y)=O_P(\log y)$ we conclude that
\begin{equation}\label{nl}
N^\ast(\log y)=O_P(\log y).
\end{equation}
Using this and \eqref{li} gives
$$\int_1^t f_1(t/x){\rm d}N^\ast(\log x)=\int_0^{\log t}(\log
t-x){\rm d}N^\ast(x)+O_P(\log t).$$ In fact, only boundedness of
$f_1(t)-\log t$ was used. Further,
\begin{eqnarray*}
\int_t^\infty f_1(t/x){\rm d}N^\ast(\log x)&=&-f_1(1)N^\ast(\log
t)+\int_0^1 N^\ast(\log t-\log x)f_1^\prime(x){\rm d}x\\
&\overset{\eqref{nl}}{\leq}& O_P(\log t)+\rho^\ast(\log
t)f_1(1)\\&+&\int_0^1 (\rho^\ast(\log t-\log x)-\rho^\ast(\log
t))f_1^\prime(x){\rm d}x\\&=&O_P(\log t),
\end{eqnarray*}
since by the well-known bound for the renewal function
$$\me \int_0^1 (\rho^\ast(\log t-\log x)-\rho^\ast(\log
t))f_1^\prime(x){\rm d}x \leq \int_0^1 (C_1|\log x|+C_2)
f_1^\prime(x){\rm d}x\overset{\eqref{0}}{<}\infty,$$ where $C_1$
and $C_2$ are positive constants. Thus we have proved \eqref{bas}.
The proof of \eqref{bas100} follows the same pattern, the only
minor difference being that now we use inequality
$$\int_t^\infty h(t/x){\rm d}N^\ast(\log x)\leq \int_t^\infty f_2(t/x){\rm d}N^\ast(\log x)$$
and \eqref{0} for $f_2$.

Throughout the rest of the proof we apply results of Subsection
\ref{prw} to the vector $(\xi, \eta):=(|\log W|, |\log(1-W)|)$.
With this specific choice the quantities $\rho(x)$ and $N(x)$
defined in Subsection \ref{prw} turn into $\rho^\ast(x)$ and
$N^\ast(x)$.

Let $(X(t))_{t\geq 0}$ be a L\'{e}vy process with $\log \me
e^{izX(1)}=\psi(z)$, $z\in\mr$. Then
\begin{equation}\label{Le}
\log \me \exp\bigg(iz\int_0^1 X(t){\rm d}t\bigg)=\int_0^1
\psi(zs){\rm d}s,
\end{equation}
which follows from a Riemann approximation to the integral.

Assume that the assumptions of Theorem \ref{main2} hold which
implies that the assumption of Theorem \ref{main1} (with $\rho$
replaced by $\rho^\ast$) holds. By scaling $Z$ and $c(x)$, if
necessary, we can assume that $Z$ has the standard normal
distribution under the assumptions of parts (a) and (b) of Theorem
\ref{main2} and that $Z$ has a stable law with characteristic
function \eqref{st} under \eqref{domain1}. Then \eqref{Le} implies
that $X\od 3^{-1/2}Z\od \mathcal{N}(0,1/3)$ in the first case, and
that $X\od (\alpha+1)^{-1/\alpha}Z$ in the second case.

By Theorem \ref{main1},
$${\int_0^{\log t} (N^\ast(y)-\mu^{-1}(y-r^\ast(y))){\rm d}y\over
c(\log t)\log t} \ \dod \ X, \ \ t\to\infty,$$ where
$r^\ast(y):=\int_0^y\mmp\{|\log (1-W)|>z\} {\rm d}z$. Since $\lit
c(t)=\infty$, using \eqref{bas} yields $${\me
(V(t)|(P_k))-\mu^{-1}\bigg(2^{-1}\log^2t-\int_0^{\log t}
r^\ast(y){\rm d}y\bigg)\over c(\log t)\log t} \ \dod \ X, \ \
t\to\infty,$$ and hence
$${V(t)-\mu^{-1}\bigg(2^{-1}\log^2t-\int_0^{\log t}
r^\ast(y){\rm d}y\bigg)\over c(\log t)\log t} \ \dod \ X, \ \
t\to\infty,$$ by virtue of \eqref{bas100} and Chebyshev's
inequality.

Now we have to de-Poissonize, i.e., to pass from the Poissonized
occupancy model to the fixed-$n$ model. This is simple as $(\log
T_n)$ is a nondecreasing sequence. Set
$$b(t):=\mu^{-1}\bigg(2^{-1}\log^2 t-\int_0^{\log
t}\mmp\{|\log(1-W)|>y\}{\rm d}y\bigg) \ \ \text{and} \ \
a(t):=c(\log t)\log t.$$ Recall that we take a properly adjusted
$c(x)$. Since $a(t)$ grows faster than the logarithm, we have
$$\lit {b(t)-b(\lfloor\,t(1\pm \varepsilon)\rfloor\,)\over
a(t)}=0,$$ for every $\varepsilon>0$. This together with slow
variation of $a(t)$ give
$$X_{\pm}(t):={V(t)-b(\lfloor t(1\pm \varepsilon)\rfloor)
\over a(\lfloor t(1\pm \varepsilon)\rfloor)}\ \dod \ X.$$ By the
monotonicity of $(\log T_n)$, we have
\begin{eqnarray*} X_+(t)&=&
X_+(t)1_{D_t}+X_+(t)1_{(D_t)^c} \\&\leq&
{V_{\lfloor(1+\varepsilon)t\rfloor}-b(\lfloor t(1+
\varepsilon)\rfloor)\over a(\lfloor t(1+\varepsilon)\rfloor
)}1_{D_t}+X_+(t)1_{(D_t)^c},
\end{eqnarray*}
where $D_t:=\big\{\pi_t\in [\lfloor(1-\varepsilon)t\rfloor,
\lfloor (1+\varepsilon)t\rfloor]\big\}$. Since ${\mathbb P}(D_t)
\to 1$, hence $X_+(t)1_{(D_t)^c}\ \ \tp \ 0,$ we conclude that
$$\mmp\{X>x\} \leq
\underset{n\to\infty}{\lim\inf}\,\mmp\bigg\{{\log T_n-b(n)\over
a(n)}>x \bigg\},$$ for all $x\in\mr$. To prove the converse
inequality for the upper bound one can proceed in a similar
manner.

It remains to set $b_n=b(n)$, and $a_n=(\alpha+1)^{-1/\alpha}a(n)$
if the assumption of part (c) holds, and $a_n=3^{-1/2}a(n)$ if the
assumptions of parts (a) and (b) hold. The fact that the
so-defined $a_n$ and $b_n$ are of the form as stated in Theorem
\ref{main2} follows from considerations above and from, for
instance, Proposition 27 in \cite{Neg}. The proof of Theorem
\ref{main2} is complete.

\noindent {\it Proof of Theorem \ref{main}}. By Theorem
\ref{main2}, $(\log T_n-b_n)/a_n$, with case-dependent $a_n$ and
$b_n$ defined in Theorem \ref{main}, weakly converges. In
particular, we know that $\log^{3/2}n=O(a_n)$. It remains to apply
Lemma \ref{approx_lemma} and Markov's inequality. The proof of
Theorem \ref{main} is complete.

\section{Appendix}
The following lemma is a simple consequence of Proposition 3 in \cite{Gne}.
\begin{lemma}\label{gne_lemma}
Assume that the sequence $a_n$ satisfies the following recurrence relation
$$
a_0=0,\;\;a_n=b_n+\sum_{k=0}^n q(n,k)a_{n-k},\;\;n\in\mn.
$$
Then
\begin{itemize}
\item[(i)] if $b_n=O(1)$ then $a_n=O(\log n)$, as $n\to\infty$,
\item[(ii)] if $b_n=O(\log n)$ then $a_n=O(\log^2 n)$, as $n\to\infty$.
\end{itemize}
\end{lemma}

The next lemma verifies \eqref{inhomogeneous_term1} which is a key
ingredient of the proof of Lemma \ref{dnk_mom}.
\begin{lemma}\label{divis_bin}
Relation \eqref{inhomogeneous_term1} holds provided
the density $f$ of $W$ satisfies any of the following two conditions:
\begin{itemize}
\item[\rm(i)] condition \eqref{density_cond} holds for some $\alpha\in[0,1)$.
\item[\rm(ii)] there exist $\delta_1\geq 0$ and $\delta_2\geq 0$ such that $f$ is nonincreasing on $(0,\delta_1)$,
bounded on $[\delta_1,1-\delta_2]$ and nondecreasing on
$(1-\delta_2,1)$.
\end{itemize}

\begin{proof}
We start with easier part (i). We have
\begin{eqnarray*}
k\sum_{r=1}^{\lfloor n/k\rfloor -1}\mmp\{A_n=rk\}&=&\frac{k}{1-\me W^n}\sum_{r=1}^{\lfloor n/k\rfloor -1}{n\choose rk}\int_0^1 x^{n-rk}(1-x)^{rk} f(x){\rm d}x\\
&\leq& {\rm const}\,\frac{k}{1-\me W^n}\sum_{r=1}^{\lfloor n/k\rfloor -1}{n\choose rk}\int_0^1 x^{n-rk-\alpha}(1-x)^{rk-\alpha}{\rm d}x\\
&=&{\rm const}\,\frac{k}{1-\me W^n}\sum_{r=1}^{\lfloor n/k\rfloor -1}\frac{\Gamma(n+1)\Gamma(n-rk-\alpha+1)\Gamma(rk-\alpha+1)}{\Gamma(n-2\alpha+2)
\Gamma(n-rk+1)\Gamma(rk+1)}\\
&\leq& {\rm const}\, \frac{1}{1-\me W^n}\frac{k}{n^{1-2\alpha}}\sum_{r=1}^{\lfloor n/k\rfloor -1}\left((n-rk)rk\right)^{-\alpha}\\
&\leq& {\rm const}\, \frac{1}{1-\me W^n}\frac{k^{1-2\alpha}}{n^{1-2\alpha}}\sum_{r=1}^{\lfloor n/k\rfloor -1}\left((\lfloor  n/k\rfloor -r)r\right)^{-\alpha}=O(1),\\
\end{eqnarray*}
The fourth line is a consequence of the inequality given in
\cite{Abr}, formula (6.1.47): for $c, d>-1$ there exists
$M_{c,d}>0$ such that for all $n\in\mn$
$$\bigg|{\Gamma(n+c)\over \Gamma(n+d)}-n^{c-d}\bigg|\leq M_{c,d}n^{c-d-1}.$$
The equality in the last line follows from the estimate
$\sum_{j=1}^{m-1}((m-j)j)^{-\alpha}\leq {\rm const}\;
m^{1-2\alpha}$ which holds for $\alpha<1$ and $m\in\mn$. The proof
of part (i) is complete.

Passing to part (ii) 
we can write
\begin{equation}\label{dens_repres}
f(x)=\mmp\{W \leq \delta_1\}f_1(x)+\mmp\{\delta_1< W\leq
1-\delta_2\}f_2(x)+\mmp\{W > 1-\delta_2\}f_3(x),
\end{equation}
where $f_1$, $f_2$ and $f_3$ are some densities such that $f_1$ is
nonincreasing on $(0,1)$, $f_3$ is nondecreasing on $(0,1)$ and
$f_2$ is bounded on $(0,1)$. It is known (see \cite{Lukacs}) that
if a random variable  $X$ with support $[0,1]$ has a nonincreasing
(nondecreasing) density $h$ then there exists a distribution
function $G$ such that $h(x)=\int_x^1 {{\rm d}G(y)\over y}$ (resp.
$h(x)=\int_{1-x}^1 {{\rm d}G(y)\over y}$).
Using this observation \eqref{dens_repres} can be rewritten as
follows
\begin{eqnarray*}
f(x)&=&\mmp\{W \leq \delta_1\} \int_0^1 \frac{1_{\{x\in[0,y]\}}}{y}{\rm d}G_1(y)+\mmp\{\delta_1< W\leq 1-\delta_2\}f_2(x)\\
&+&\mmp\{W > 1-\delta_2\} \int_0^1
\frac{1_{\{x\in[1-y,1]\}}}{y}{\rm d}G_2(y),
\end{eqnarray*}
where $G_1,G_2$ are some distribution functions concentrated on $[0,\delta_1]$ and $[1-\delta_2,1]$, respectively.

The last formula can be seen as a representation of $f$ as a
convex linear combination of the densities of three types:
$g_{\varepsilon}(x)=\varepsilon^{-1}{1_{\{x\in[0,\varepsilon]\}}}$,
$h_{\varepsilon}(x)=\varepsilon^{-1}{1_{\{x\in[1-\varepsilon,1]\}}}$
and bounded densities. Thus to prove (ii) it is enough to show
that relation \eqref{inhomogeneous_term1} holds for densities of
these types uniformly in $\varepsilon\in(0,1)$. The validity of
\eqref{inhomogeneous_term1} for bounded densities follows from
part (i) of the lemma (take $\alpha=0$). We only check
\eqref{inhomogeneous_term1} for $g_{\varepsilon}$, as the argument
is symmetric for $h_{\varepsilon}$. We have
$$
\mmp\{A_n=k\}={n\choose k}\varepsilon^{-1}\int_0^{\varepsilon}p^k(1-p)^{n-k}{\rm d}p=\frac{1}{(n+1)\varepsilon}I_{\varepsilon}(k+1,n-k+1),
$$
where $I_{\varepsilon}(k+1,n-k+1)$ is the normalized truncated
beta-function (see formula (6.6.2) in \cite{Abr}). Using formulae
(6.6.5) and (6.6.4) of the same reference we obtain
$$
\mmp\{A_n=k\}=\frac{1}{n+1}I_{\varepsilon}(k,n-k+1)+\frac{1-\varepsilon}{(n+1)\varepsilon}\mmp\{B\geq k+1\}\leq \frac{1}{n+1}+\frac{1}{(n+1)\varepsilon}\mmp\{B\geq k+1\}
$$
where a random variable $B$ has the binomial distribution with
parameters $(n,\varepsilon)$. This yields
\begin{eqnarray*}
k\sum_{r=1}^{\lfloor n/k\rfloor -1}\mmp\{A_n=rk\}&\leq& k\sum_{r=1}^{\lfloor n/k\rfloor -1}\Big(\frac{1}{n+1}+\frac{1}{(n+1)\varepsilon}\mmp\{B\geq rk+1\}\Big)\\
&\leq& 1 + \frac{k}{(n+1)\varepsilon}\sum_{r=1}^{\lfloor n/k\rfloor -1}\mmp\{B\geq rk+1\}\\
&=& 1 + \frac{k}{(n+1)\varepsilon}\sum_{r=1}^{\lfloor n/k\rfloor -1}\sum_{j=rk+1}^n\mmp\{B=j\}\\
&\leq& 1 + \frac{k}{(n+1)\varepsilon}\sum_{j=1}^n \sum_{r=1}^{\lfloor j/k\rfloor }\mmp\{B=j\}\\
&\leq& 1 + \frac{1}{(n+1)\varepsilon}\sum_{j=1}^n j\mmp\{B=j\}\leq 2.\\
\end{eqnarray*}
The proof of part (ii) is complete.
\end{proof}

\end{lemma}

\begin{lemma}\label{recursion_l}
Let $(b_n(k))_{n\in\mn,1\leq k\leq n}$, $(c_n)_{n\in\mn}$ and
$(d_n)_{n\in\mn}$ be nonnegative arrays.
Let $(a_n(k))_{n\in\mn_0,\,k\in\mn}$
and  $(a'_n)_{n\in\mn_0}$ be defined recursively via
\begin{eqnarray*}
a_0(k)=a_1(k)=\ldots=a_{k-1}(k)=0, \ \ k\in\mn;\\
a_n(k)=b_n(k)+\sum_{i=k}^{n-1}p_{n,i}a_i(k), \ \ k\leq n, \
k\in\mn;
\end{eqnarray*}
and
\begin{equation*}
a'_0=0,\;\;a'_n=d_n+\sum_{i=0}^{n-1}p_{n,i}a'_i,\;\;n\in\mn,
\end{equation*}
respectively, where $(p_{n,k})_{0\leq k\leq n-1}$ is a probability
distribution, for every fixed $n\in\mn$.

If
\begin{equation}\label{lcond}
c_k b_n(k)\leq d_n, \ \ n\in\mn,\;\;k\leq n, \ k\in\mn,
\end{equation}
then
\begin{equation}\label{lconc}
c_k a_n(k)\leq a'_n, \ \ n\in\mn,\;\;k\leq n, \ k\in\mn.
\end{equation}
\begin{proof}
We shall prove the lemma by induction on $n$. The base of
induction is straightforward. Assume that \eqref{lconc} holds for
all positive integer $n\leq N$ and $k\leq n$. We have to prove
\eqref{lconc} for $n=N+1$ and $k\leq N+1$, $k\in\mn$. Assume first
that $k\leq N$, then
\begin{eqnarray*}
c_k a_{N+1}(k)&=&c_k b_{N+1}(k)+\sum_{i=k}^{N}p_{N+1,\,i}c_k a_i(k)\overset{\eqref{lcond}}{\leq} d_{N+1}+
\sum_{i=k}^{N}p_{N+1,\,i}c_k a_i(k)\\
&\overset{\text{induction
}}{\leq} & d_{N+1}+\sum_{i=k}^{N}p_{N+1,\,i}a'_i\leq
d_{N+1}+\sum_{i=0}^N p_{N+1,\,i}a'_i=a'_{N+1}.
\end{eqnarray*}
For $k=N+1$ we have
$$
c_{N+1}a_{N+1}(N+1)=c_{N+1}b_{N+1}(N+1)\leq d_{N+1}\leq a'_{N+1}.
$$
The proof is complete.
\end{proof}
\end{lemma}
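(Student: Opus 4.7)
The plan is to prove Lemma \ref{recursion_l} by straightforward induction on $n$, handling the boundary case $k=n$ separately from the inductive case $k<n$. The inequality $c_k a_n(k)\leq a'_n$ has the shape of a comparison principle between two affine recursions with the same weights $p_{n,i}$, so the induction essentially says: if the inhomogeneous terms satisfy $c_k b_n(k)\leq d_n$ pointwise, and the recursive contribution at smaller indices is already dominated, then the recursion preserves the domination.

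First I would verify the base case $n=0$ (and, if needed, $n=1$): here the definitions force $a_n(k)=0$ and $a'_n=0$ (or $a'_1=d_1\geq 0$), so the claim $c_ka_n(k)\leq a'_n$ is immediate. Then I would fix $N\in\mn$, assume inductively that $c_ka_n(k)\leq a'_n$ holds for every $n\leq N$ and every $k\in\{1,\dots,n\}$, and prove the statement for $n=N+1$.

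For the inductive step I would split on whether $k\leq N$ or $k=N+1$. When $k\leq N$, multiply the recursion for $a_{N+1}(k)$ by $c_k$, bound the inhomogeneous term using the hypothesis \eqref{lcond}, then bound each term $c_ka_i(k)$ inside the sum (with $k\leq i\leq N$) by $a'_i$ using the induction hypothesis. Since $p_{N+1,i}\geq 0$ and the distribution $(p_{N+1,i})_{0\leq i\leq N}$ is the same one appearing in the recursion for $a'_{N+1}$, extending the summation range from $i=k,\dots,N$ to $i=0,\dots,N$ only enlarges the right-hand side, giving exactly $a'_{N+1}$. When $k=N+1$ the recursion sum is empty, so $a_{N+1}(N+1)=b_{N+1}(N+1)$ and a single application of \eqref{lcond} yields $c_{N+1}a_{N+1}(N+1)=c_{N+1}b_{N+1}(N+1)\leq d_{N+1}\leq a'_{N+1}$, where the last step uses $a'_{N+1}\geq d_{N+1}$ because all $a'_i$ are nonnegative.

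There is no real obstacle here; the lemma is essentially a tautology about monotonicity of affine recursions with common nonnegative coefficients, and the only thing to watch is that the lower summation limit in the recursion for $a_n(k)$ is $i=k$ rather than $i=0$, which is exactly what lets one apply the induction hypothesis (requiring $k\leq i$) and then extend the sum to get $a'_n$.
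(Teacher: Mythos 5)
Your proposal is correct and matches the paper's argument essentially line for line: induction on $n$, splitting into the cases $k\leq N$ and $k=N+1$, bounding the inhomogeneous term via \eqref{lcond}, applying the induction hypothesis inside the sum, and then enlarging the range of summation to recover $a'_{N+1}$. There is nothing to add.
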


\vskip0.2cm \noindent {\bf Acknowledgement} The authors are
indebted to a referee for  thoughtful comments which have led to a
number of improvements. The work of the second author was
partially supported by the Department of Mathematics of the
Utrecht University and the Dutch stochastics cluster STAR.


\begin{thebibliography}
\footnotesize

\bibitem{Abr}{\sc Abramowitz, M. and Stegun, I.} (1964). Handbook of
Mathematical Functions with Formulas, Graphs, and Mathematical
Tables. New York: Dover.

\bibitem{Apostol}{\sc Apostol T. M.} (1976). Introduction to Analytic Number Theory. 
New York: Springer-Verlag.

\bibitem{ABT}{\sc Arratia, R., Barbour, A.D.  and Tavar\'{e}, S.} (2003). {\em Logarithmic
combinatorial structures}, European Mathematical Society.

\bibitem{AT92b}{\sc Arratia, R. and Tavar\'{e}, S.} (1992). Limit theorems for combinatorial
structures via discrete process approximations. {\em Random
Struct. Algorithms} {\bf 3}, 321--345.

\bibitem{Man1} {\sc Babu, G.~J. and Manstavi\v{c}ius, E.} (2002). Limit processes
with independent increments for the Ewens sampling formula. {\em
Ann. Inst. Stat. Math.} {\bf 54}, 607--620.

\bibitem{Ueltschi}
{\sc Betz, V.,  Ueltschi, D. and Velenik, Y. } (2011). Random
permutations with cycle weights. {\em  Ann. Appl. Prob.} {\bf 21},
312--331.

\bibitem{Bing73}{\sc Bingham, N.~H.} (1973). Maxima of sums of random variables and suprema of stable
processes. {\em Z. Wahrsch. verw. Gebiete.} {\bf 26}, 273-- 296.

\bibitem{Diaconis} {\sc Diaconis, P.} (1988)
{\em Group representations in probability and statistics},
IMS Lecture Notes--Monograph Series, Volume 11
Institute of Mathematical Statistics, Hayward, CA.


\bibitem{ET67}{\sc Erd\"{o}s, P. and Tur\'{a}n, P.} (1967). On some problems of statistical group theory III. {\em Acta. Math.
Acad. Sci. Hungar.} {\bf 18}, 309--320.

\bibitem{Gne}{\sc Gnedin, A.} (2004). The Bernoulli sieve.
{\em Bernoulli} {\bf 10}, 79--96.





\bibitem{Gne2} {\sc Gnedin, A.} (2004). Three sampling formulas.
{\em Combinatorics, Probability and Computing}, {\bf 13},
185--193.

\bibitem{DiscrMath} {\sc Gnedin, A.} (2011).
Coherent random permutations with biased record statistics. {\em
Discrete Mathematics}  {\bf 311}, 80--91.


\bibitem{GHP} {\sc Gnedin, A., Haulk, C. and Pitman, J.} (2010).
Characterizations of exchangeable partitions and random discrete
distributions by deletion properties. {\em London Mathematical
Society Lecture Notes Series} {\bf 378}, 264--298.


\bibitem{GIM} {\sc Gnedin, A., Iksanov, A. and Marynych, A.} (2010).
Limit theorems for the number of occupied boxes in the Bernoulli
sieve. {\em Theory of Stochastic Processes} {\bf 16(32)}, 44--57.

\bibitem{GIM2} {\sc Gnedin, A., Iksanov, A. and Marynych, A.} (2010).
The Bernoulli sieve: an overview. {\em Discr. Math. Theoret.
Comput. Sci.} Proceedings Series, {\bf AM}, 329--342.

\bibitem{GINR}{\sc Gnedin, A., Iksanov, A., Negadajlov, P. and Roesler, U.} (2009).
The Bernoulli sieve revisited. {\em Ann.\,Appl.\,Prob.} {\bf 19},
1634--1655.

\bibitem{GIR}{\sc Gnedin, A., Iksanov, A. and Roesler, U.} (2008). Small parts in the Bernoulli sieve.
{\em Discr. Math. Theoret. Comput. Sci.} Proceedings Series, {\bf
AI}, 239--246.

\bibitem{GO} {\sc Gnedin, A. and Olshanski, G.} (2006). Coherent permutations with descent
statistic and the boundary problem for the graph of zigzag
diagrams. {\em Intern. Math. Res. Not.} Art. 51968, 1--39.

\bibitem{RCS} {\sc Gnedin, A. and Pitman, J.} (2005). Regenerative composition structures.
{\em Ann. Probab.} {\bf 33},
445--479.

\bibitem{Iks11} {\sc Iksanov, A.} (2012+). On the number of empty
boxes in the Bernoulli sieve I. {\em Stochastics}, to appear.

\bibitem{Iks111} {\sc Iksanov, A.} (2012+). On the number of empty
boxes in the Bernoulli sieve II. {\em Stoch. Proc. Appl.}, to
appear.

\bibitem{Lukacs}{\sc Lukacs, E.} (1970). {\it Characteristic functions}. London:
Charles Griffin \& Company.

\bibitem{Szpankowski}{\sc Jacquet, P. and Szpankowski, W.} (1999). Entropy computations via analytic de-Poissonization.
{\em IEEE Trans. Inform. Theory}. {\bf 45}, 1072--1081.

\bibitem{Pittel} {\sc DeLaurentis, J.~M. and Pittel, B.~G.} (1985). Random permutations and Brownian motion.
{\em Pacific J. Math.} {\bf 119}, 287--301.

\bibitem{Man2} {\sc Manstavi\v{c}ius, E.} (2009). An analytic method in probabilistic combinatorics. {\em Osaka J. Math.} {\bf 46},
273–-290.

\bibitem{Med} {\sc Medvedev, Yu.~I.} (1977). Separable statistics in a polynomial scheme.
II. {\em Theory Probab. Appl.} {\bf 22}, 607--615.

\bibitem{Mirah} {\sc Mirakhmedov, Sh.~A.} (1989). Randomized decomposable statistics in a generalized allocation
scheme over a countable set of cells. {\em Diskretnaya
Matematika.} {\bf 1}, 46--62.

\bibitem{Neg} {\sc Negadailov, P.} (2010). Limit theorems for
random recurrences and renewal-type processes. PhD thesis, Utrecht
University. Available at
http://igitur-archive.library.uu.nl/dissertations/

\bibitem{CSP} {\sc Pitman, J.} (2006).
{\em Combinatorial Stochastic Processes}, Berlin: Springer-Verlag.

\end{thebibliography}
\end{document}